\documentclass[11pt]{amsart}
\usepackage[margin=30mm]{geometry}
\usepackage{amsmath,amssymb}
\usepackage{amsthm}

\newtheorem{thm}{Theorem}[section]

\newtheorem{lem}{Lemma}[section]

\newtheorem{defi}{Definition}[section]
\newtheorem{ex}{Example}[section]
\newtheorem{rem}{Remark}[section]

\begin{document}

\title{Contractive shadowing property of dynamical systems}
\author{Noriaki Kawaguchi}
\subjclass[2020]{37B65}
\keywords{contractive shadowing; h-shadowing; ultrametric; spectral decomposition}
\address{Research Institute of Science and Technology, Tokai University, 4-1-1 Kitakaname, Hiratsuka, Kanagawa 259-1292, Japan}
\email{gknoriaki@gmail.com}

\begin{abstract}
For topological dynamical systems defined by continuous self-maps of compact metric spaces, we consider the contractive shadowing property, i.e., the Lipschitz shadowing property such that the Lipschitz constant is less than $1$. We prove some basic properties of contractive shadowing and show that non-degenerate homeomorphisms do not have the contractive shadowing property. Then, we consider the case of ultrametric spaces. We also discuss the spectral decomposition of the chain recurrent set under the assumption of contractive shadowing. Several examples are given to illustrate the results. 
\end{abstract}

\maketitle

\markboth{NORIAKI KAWAGUCHI}{Contractive shadowing property of dynamical systems}

\section{Introduction}

Throughout, $X$ denotes a compact metric space endowed with a metric $d$. Let $f\colon X\to X$ be a continuous map and let $\xi=(x_i)_{i\ge0}$ be a sequence of points in $X$. For $\delta>0$, $\xi$ is called a {\em $\delta$-pseudo orbit} of $f$ if $d(f(x_i),x_{i+1})\le\delta$ for all $i\ge0$. For $\epsilon>0$, $\xi$ is said to be {\em $\epsilon$-shadowed} by $x\in X$ if $d(f^i(x),x_i)\leq \epsilon$ for all $i\ge 0$. For $L>0$, we say that $f$ has the {\em $L$-Lipschitz shadowing property} if there is $\delta_0>0$ such that for any $0<\delta\le\delta_0$, every $\delta$-pseudo orbit of $f$ is $L\delta$-shadowed by some point of $X$. If $f$ has the $L$-Lipschitz shadowing property for some $L>0$, then $f$ is said to have the {\em Lipschitz shadowing property}.

\begin{rem}
\normalfont
\begin{enumerate}
\item We say that $f$ has the (standard) {\em shadowing property} if for any $\epsilon>0$, there is $\delta>0$ such that every $\delta$-pseudo orbit of $f$ is $\epsilon$-shadowed by some point of $X$.
\item For $0<\alpha\le1$, let $d^\alpha$ be a metric on $X$ defined by
\[
d^\alpha(x,y)=d(x,y)^\alpha
\]
for all $x,y\in X$. For any continuous map $f\colon X\to X$, we easily see that if $f$ has the $L$-Lipschitz shadowing property with respect to $d$, then $f$ has the $L^\alpha$-Lipschitz shadowing property with respect to $d^\alpha$.
\end{enumerate}
\end{rem}

We know that for any closed Riemannian manifold $M$, a $C^1$-diffeomorphism $f\colon M\to M$ is structurally stable if and only if $f$ has the Lipschitz shadowing property \cite{PS}. In \cite{LS,S2,S3}, it is shown that a positively expansive map or an expansive homeomorphism $f\colon X\to X$ has the shadowing property if and only if $f$ has the Lipschitz shadowing property with respect to an equivalent metric $D$ to $d$. Moreover, it is questioned in \cite{S1} whether {\em every} homeomorphism $f\colon X\to X$ with the shadowing property satisfies the Lipschitz shadowing property with respect to an equivalent metric $D$ to $d$. In this paper, we consider the $L$-Lipschitz shadowing such that the constant $L$ is less than 1.

\begin{defi}
\normalfont
We say that a continuous map $f\colon X\to X$ has the {\em contractive shadowing property} if $f$ has the $L$-Lipschitz shadowing property for some $0<L<1$.
\end{defi}

We first relate the contractive shadowing to the so-called {\em h-shadowing}. We recall the definition of h-shadowing from \cite{BGO} (see also \cite{BGOR}). Given a continuous map $f\colon X\to X$ and $\delta>0$, a finite sequence $(x_i)_{i=0}^{k}$ of points in $X$, where $k>0$ is a positive integer, is called a {\em $\delta$-chain} of $f$ if $d(f(x_i),x_{i+1})\le\delta$ for every $0\le i\le k-1$.  We say that $f$ has the {\em h-shadowing property} if for any $\epsilon>0$, there is $\delta>0$ such that for every $\delta$-chain $(x_i)_{i=0}^k$ of $f$, there is $x\in X$ such that $d(f^i(x),x_i)\le\epsilon$ for all $0\le i\le k-1$ and $f^k(x)=x_k$. Then, the following theorem holds. 

\begin{thm}
If a continuous map $f\colon X\to X$ has the contractive shadowing property, then $f$ satisfies the h-shadowing property.
\end{thm}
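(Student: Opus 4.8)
The plan is to exploit the contraction constant $L<1$ to upgrade an approximate shadowing point into one whose $k$-th iterate lands \emph{exactly} on $x_k$, through an iterative correction scheme whose errors decay geometrically. Fix $0<L<1$ and $\delta_0>0$ witnessing the $L$-Lipschitz (contractive) shadowing property. Given $\epsilon>0$, I would set $\delta=\min\{\delta_0,(1-L)\epsilon/L\}$ and take an arbitrary $\delta$-chain $(x_i)_{i=0}^{k}$ of $f$. First extend it to an infinite $\delta$-pseudo orbit $\xi^{(0)}=(x_0,\dots,x_{k-1},x_k,f(x_k),f^2(x_k),\dots)$ by appending the forward orbit of $x_k$; every jump past coordinate $k-1$ vanishes, so $\xi^{(0)}$ is genuinely a $\delta$-pseudo orbit, and contractive shadowing provides $p_0\in X$ that $L\delta$-shadows $\xi^{(0)}$.

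The heart of the argument is a recursive step that feeds the orbit of the previous shadowing point back in as the next pseudo orbit. Suppose $p_n$ has been constructed, and set $\delta_{n+1}=d(f^k(p_n),x_k)$; if $\delta_{n+1}=0$ the construction terminates and $p_n$ already works (see below), so assume $\delta_{n+1}>0$ and consider
\[
\xi^{(n+1)}=(p_n,f(p_n),\dots,f^{k-1}(p_n),x_k,f(x_k),f^2(x_k),\dots).
\]
Its only nonzero jump is the one at coordinate $k-1$, equal to $d(f^k(p_n),x_k)=\delta_{n+1}$, so $\xi^{(n+1)}$ is a $\delta_{n+1}$-pseudo orbit. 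One checks by induction that $0<\delta_{n+1}\le L^{n+1}\delta\le\delta_0$, so the property applies at every stage, yielding $p_{n+1}$ that $L\delta_{n+1}$-shadows $\xi^{(n+1)}$; in particular $d(f^i(p_{n+1}),f^i(p_n))\le L\delta_{n+1}$ for $0\le i\le k-1$ and $\delta_{n+2}=d(f^k(p_{n+1}),x_k)\le L\delta_{n+1}$, which closes the induction.

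From $d(f^i(p_{n+1}),f^i(p_n))\le L^{n+2}\delta$ for $0\le i\le k-1$ it follows that each sequence $(f^i(p_n))_{n\ge0}$ is Cauchy; setting $p^\ast=\lim_n p_n$, continuity of $f$ gives $f^i(p_n)\to f^i(p^\ast)$ for every $i$. Since $d(f^k(p_n),x_k)=\delta_{n+1}\to0$ we obtain $f^k(p^\ast)=x_k$, while the geometric estimate gives $d(f^i(p^\ast),x_i)\le d(f^i(p_0),x_i)+\sum_{n\ge0}L^{n+2}\delta\le L\delta+\tfrac{L^2}{1-L}\delta=\tfrac{L}{1-L}\delta\le\epsilon$ for $0\le i\le k-1$ (the terminating case is settled by the same estimate applied to $p_n$). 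Hence $p^\ast$ verifies the h-shadowing condition for $\epsilon$, and since $\delta$ depends only on $\epsilon$, this proves h-shadowing.

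I expect the only delicate point — and the place where the hypothesis $L<1$ is indispensable — to be the design of the correctors $\xi^{(n+1)}$: by threading the orbit segment of $p_n$ through them, the accuracy parameter of the $(n+1)$-st pseudo orbit is forced to equal the current terminal error $d(f^k(p_n),x_k)$, so the contraction drives these errors, and with them the displacements of successive shadowing points, to zero at a geometric rate. For $L\ge1$ the telescoping sums would diverge and the scheme would fail, which is consistent with the known fact that ordinary shadowing does not imply h-shadowing.
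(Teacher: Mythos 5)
Your proposal is correct and follows essentially the same iterative correction scheme as the paper: re-shadow the orbit segment of the previous shadowing point capped by $x_k$, observe that the errors contract geometrically by the factor $L<1$, and pass to the Cauchy limit to get $f^k(p^\ast)=x_k$ with the $\frac{L}{1-L}\delta$ estimate on the first $k$ coordinates. The only differences are cosmetic (you track the exact terminal error $\delta_{n+1}$ and treat the degenerate case $\delta_{n+1}=0$, and you make explicit the extension of finite chains to infinite pseudo orbits, which the paper leaves implicit).
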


\begin{rem}
\normalfont
\begin{enumerate}
\item It is easy to see that every continuous map $f\colon X\to X$ with the h-shadowing property is an open map, that is, for any open subset $U$ of $X$, $f(U)$ is an open subset of $X$.
\item A continuous map $f\colon X\to X$ is said to be {\em transitive} (resp.\:{\em mixing}) if for any non-empty open subsets $U,V$ of $X$, it holds that $f^j(U)\cap V\ne\emptyset$ for some $j>0$ (resp.\:for all $j\ge i$ for some $i>0$). We say that $f$ is {\em locally eventually onto} if for any non-empty open subset $U$ of $X$, there is $i>0$ such that $f^i(U)=X$. We easily see that if $f$ is locally eventually onto, then $f$ is mixing, and the converse holds when $f$ has the h-shadowing property.
\item Let $f\colon X\to X$ be a continuous map and let $\xi=(x_i)_{i\ge0}$ be a sequence of points in $X$. For $\delta>0$, $\xi$ is called a {\em $\delta$-limit-pseudo orbit} of $f$ if $d(f(x_i),x_{i+1})\le\delta$ for all $i\ge0$, and
\[
\lim_{i\to\infty}d(f(x_i),x_{i+1})=0.
\]
For $\epsilon>0$, $\xi$ is said to be {\em $\epsilon$-limit shadowed} by $x\in X$ if $d(f^i(x),x_i)\leq \epsilon$ for all $i\ge 0$, and
\[
\lim_{i\to\infty}d(f^i(x),x_i)=0.
\]
We say that $f$ has the {\em s-limit shadowing property} if for any $\epsilon>0$, there is $\delta>0$ such that every $\delta$-limit-pseudo orbit of $f$ is $\epsilon$-limit shadowed by some point of $X$. By Theorem 3.7 of \cite{BGO}, we know that if $f$ has the h-shadowing property, then $f$ has the s-limit shadowing property.  
\end{enumerate}
\end{rem}

Here, we recall a basic lemma and prove it for completeness. For $x\in X$ and $r>0$, $B_r(x)$ denotes the closed $r$-ball centered at $x$:
\[
B_r(x)=\{y\in X\colon d(x,y)\le r\}.
\]  

\begin{lem}
Let $f\colon X\to X$ be a continuous map. For any $\delta,\epsilon>0$, if
\[
B_{\delta+\epsilon}(f(x))\subset f(B_{\epsilon}(x)) 
\]  
for all $x\in X$, then
\begin{itemize}
\item for every $\delta$-chain $(x_i)_{i=0}^k$ of $f$, there is $x\in X$ such that $d(f^i(x),x_i)\le\epsilon$ for all $0\le i\le k-1$ and $f^k(x)=x_k$,
\item every $\delta$-pseudo orbit of $f$ is $\epsilon$-shadowed by some point of $X$.
\end{itemize}
\end{lem}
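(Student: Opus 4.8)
The plan is to first convert the hypothesis into the more convenient inclusion
\[
B_{\epsilon}(x_{i+1})\subset f(B_{\epsilon}(x_i))
\]
along any $\delta$-chain, and then run a \emph{backward} induction. Indeed, if $(x_i)_{i=0}^k$ is a $\delta$-chain, then for each $0\le i\le k-1$ we have $d(f(x_i),x_{i+1})\le\delta$, so $B_{\epsilon}(x_{i+1})\subset B_{\delta+\epsilon}(f(x_i))$; combining this with the assumed inclusion $B_{\delta+\epsilon}(f(x_i))\subset f(B_{\epsilon}(x_i))$ (applied at the point $x_i$) gives $B_{\epsilon}(x_{i+1})\subset f(B_{\epsilon}(x_i))$, as claimed.

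For the first bullet, I would construct points $y_k,y_{k-1},\dots,y_0$ in reverse: set $y_k=x_k$, and, having chosen $y_{i+1}\in B_{\epsilon}(x_{i+1})$ (note $y_k=x_k\in B_{\epsilon}(x_k)$ starts the recursion), use the inclusion just derived to pick $y_i\in B_{\epsilon}(x_i)$ with $f(y_i)=y_{i+1}$. Then $x:=y_0$ satisfies $f^i(x)=y_i$ for all $0\le i\le k$, hence $d(f^i(x),x_i)\le\epsilon$ for $0\le i\le k-1$ and $f^k(x)=y_k=x_k$.

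For the second bullet, let $\xi=(x_i)_{i\ge0}$ be a $\delta$-pseudo orbit. For each $n\ge1$ the truncation $(x_i)_{i=0}^n$ is a $\delta$-chain, so by the first bullet there is $z_n\in X$ with $d(f^i(z_n),x_i)\le\epsilon$ for $0\le i\le n-1$ and $f^n(z_n)=x_n$; in particular $d(f^i(z_n),x_i)\le\epsilon$ for all $0\le i\le n$. By compactness of $X$, pass to a subsequence $z_{n_j}\to x$. Fixing $i\ge0$, for all large $j$ we have $n_j\ge i$ and hence $d(f^i(z_{n_j}),x_i)\le\epsilon$; letting $j\to\infty$ and using continuity of $f^i$ gives $d(f^i(x),x_i)\le\epsilon$. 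Since $i$ is arbitrary, $x$ $\epsilon$-shadows $\xi$.

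There is essentially no substantial obstacle here: the only points that need care are the direction of the induction (backward, anchored at the forced endpoint $f^k(x)=x_k$) and, in the infinite case, replacing a (possibly impossible) direct infinite backward selection by a compactness argument on the finite approximants.
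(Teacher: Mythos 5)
Your proof is correct and takes essentially the same route as the paper: the same reduction to the inclusion $B_{\epsilon}(x_{i+1})\subset f(B_{\epsilon}(x_i))$ followed by a backward selection anchored at the endpoint $x_k$. For the infinite case the paper phrases the compactness step as the nonemptiness of the nested intersection $\bigcap_{k>0}\bigcap_{i=0}^{k-1}f^{-i}\bigl(B_{\epsilon}(x_i)\bigr)$, while you take a convergent subsequence of the finite shadowing points; these are interchangeable forms of the same argument.
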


\begin{proof}
For any $x,y,z\in X$ with $d(f(x),y)\le\delta$ and $d(y,z)\le\epsilon$, since
\[
d(f(x),z)\le d(f(x),y)+d(y,z)\le\delta+\epsilon,
\]
we have
\[
z\in B_{\delta+\epsilon}(f(x))\subset f(B_\epsilon(x)). 
\]
It follows that
\[
B_\epsilon(y)\subset f(B_\epsilon(x))
\]
for all $x,y\in X$ with $d(f(x),y)\le\delta$. Let $(x_i)_{i=0}^k$ be a $\delta$-chain of $f$. For any $0\le i\le k-1$, we have
\[
B_\epsilon(x_{i+1})\subset f(B_\epsilon(x_i))
\]
because $d(f(x_i),x_{i+1})\le\delta$. It follows that there is a sequence $p_i\in B_\epsilon(x_i)$, $0\le i\le k$, with $p_k=x_k$ and $f(p_i)=p_{i+1}$ for all $0\le i\le k-1$, implying
\[
p_0\in\bigcap_{i=0}^{k-1} f^{-i}(B_\epsilon(x_i))
\]
and $f^k(p_0)=p_k=x_k$. It also follows that for any $\delta$-pseudo orbit of $\xi=(x_i)_{i\ge0}$ of $f$,
\[
\bigcap_{i=0}^\infty f^{-i}(B_\epsilon(x_i))=\bigcap_{k>0}\bigcap_{i=0}^{k-1} f^{-i}(B_\epsilon(x_i))\ne\emptyset;
\]
therefore, $\xi$ is $\epsilon$-shadowed by some $q\in X$. This completes the proof of the lemma. 
\end{proof}

Following the terminology of \cite{BGO}, we say that a continuous map $f\colon X\to X$ is {\em ball expanding} if there is $0<L<1$ and $\delta_0>0$ such that
\[
B_\delta(f(x))\subset f(B_{L\delta}(x)) 
\]  
for all $0<\delta\le\delta_0$ and $x\in X$.

\begin{rem}
\normalfont
\begin{enumerate}
\item In \cite{BGO}, it is shown that for any continuous map $f\colon X\to X$, if $f$ is ball expanding, then $f$ has the h-shadowing property (see Theorem 4.3 of \cite{BGO}). In fact, this is a consequence of Lemma 1.1. The proof of Theorem 1.1 in Section 2 is by a direct method and does not use Lemma 1.1. Given a continuous map $f\colon X\to X$, $0<L<1$, and $\delta_0>0$, assume that for any $0<\delta\le\delta_0$, every $\delta$-pseudo orbit of $f$ is $L\delta$-shadowed by some point of $X$. As a consequence of the proof of Theorem 1.1 in Section 2, we obtain
\[
B_\delta(f(x))\subset f(B_{\frac{L}{1-L}\delta}(x)) 
\]
for all $x\in X$ and $0<\delta\le\delta_0$, thus if $L/(1-L)<1$, or equivalently, $L<1/2$, then $f$ is ball expanding.

\item Let $f\colon X\to X$ be a ball expanding map and let $0<L<1$, $\delta_0>0$ be as in the definition of ball expanding. For any $0<\Delta\le(1-L)\delta_0$, since
\[
0<\frac{1}{1-L}\Delta\le\delta_0,
\]
we have
\[
B_{\Delta+\frac{L}{1-L}\Delta}(f(x))=B_{\frac{1}{1-L}\Delta}(f(x))\subset f(B_{\frac{L}{L-1}\Delta}(x))
\]
for all $x\in X$. By Lemma 1.1, we see that every $\Delta$-pseudo orbit of $f$ is $L\Delta/(1-L)$-shadowed by some point of $X$. It follows that if $L/(1-L)<1$, or equivalently, $L<1/2$, then $f$ has the contractive shadowing property. 
\end{enumerate}
\end{rem}

For a continuous map $f\colon X\to X$ and $\delta>0$, a $\delta$-chain $(x_i)_{i=0}^k$ of $f$ is called a {\em $\delta$-cycle} of $f$ if $x_0=x_k$. We say that $x\in X$ is a {\em chain recurrent point} for $f$ if for every $\delta>0$, there is a $\delta$-cycle $(x_i)_{i=0}^{k}$ of $f$ with $x_0=x_k=x$. We denote by $CR(f)$ the set of chain recurrent points for $f$. Let $Per(f)$ denote the periodic points for $f$:
\[
Per(f)=\bigcup_{i>0}\{x\in X\colon f^i(x)=x\}.
\]
By Theorem 3.4.2 of \cite{AH}, we know that if a continuous surjection $f\colon X\to X$ is c-expansive and has the shadowing property, then $f$ satisfies $CR(f)=\overline{Per(f)}$. The next theorem shows that the same folds for any continuous map $f\colon X\to X$ with the $L$-Lipschitz shadowing property when $0<L<1/2$.

\begin{thm}
If a continuous map $f\colon X\to X$ has the $L$-Lipschitz shadowing property for some $0<L<1/2$, then $f$ satisfies $CR(f)=\overline{Per(f)}$.
\end{thm}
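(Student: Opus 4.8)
The inclusion $\overline{Per(f)}\subseteq CR(f)$ is immediate, since $Per(f)\subseteq CR(f)$ and $CR(f)$ is closed, so the plan is to establish the reverse inclusion: given $x\in CR(f)$ and $\epsilon_0>0$, I will produce a periodic point within distance $\epsilon_0$ of $x$. The first thing I would do is extract two facts from the hypothesis. By Remark 1.3(1), $L<1/2$ makes $f$ ball expanding: there are $L'=L/(1-L)<1$ and a threshold $\delta_0>0$ (the one coming from $L$-Lipschitz shadowing) with $B_\delta(f(y))\subseteq f(B_{L'\delta}(y))$ for all $y\in X$ and $0<\delta\le\delta_0$. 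Iterating this inclusion along an orbit segment $y,f(y),\dots,f^{k-1}(y)$ gives, for every $k\ge1$, that $B_\eta(f^k(y))\subseteq f^k(B_{(L')^k\eta}(y))$ for all $y$ and $0<\eta\le\delta_0$ (the intermediate radii $(L')^j\eta$ all stay $\le\delta_0$); that is, $f^k$ is ball expanding with constant $(L')^k$ and the same threshold $\delta_0$, and, decisively, $(L')^k\le L'<1$ for every $k$.

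Next I would construct an ``almost periodic'' point. Fix $\delta\le\delta_0$ (to be chosen small at the very end). Since $x\in CR(f)$, there is a $\delta$-cycle $(x_i)_{i=0}^{k}$ with $x_0=x_k=x$, and repeating it produces a periodic $\delta$-pseudo orbit $\xi=(z_j)_{j\ge0}$, $z_j=x_{j\bmod k}$, with $z_{j+k}=z_j$ for all $j$ (the only step needing a check is the wrap-around inequality $d(f(x_{k-1}),x_0)\le\delta$, which is just $d(f(x_{k-1}),x_k)\le\delta$). Let $q$ be a point that $L\delta$-shadows $\xi$, so $d(f^j(q),z_j)\le L\delta$ for all $j$. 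Reading this at $j=0$ and $j=k$ and using $z_0=z_k=x$ yields $d(q,x)\le L\delta$ and hence $d(f^k(q),q)\le 2L\delta$. So, writing $g=f^k$, the point $q$ is ``$2L\delta$-fixed'' by $g$, with $2L\delta\le\delta_0$.

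The heart of the argument is then to correct $q$ into a genuine fixed point of $g$ by chasing preimages. Since $q\in B_{2L\delta}(g(q))\subseteq g\bigl(B_{2L(L')^k\delta}(q)\bigr)$, there is $q_1$ with $g(q_1)=q$ and $d(q_1,q)\le 2L(L')^k\delta$; then $d(g(q_1),q_1)=d(q,q_1)\le 2L(L')^k\delta\le\delta_0$, so the same step applies to $q_1$, and inductively I get a sequence $(q_n)_{n\ge0}$ with $q_0=q$, $g(q_{n+1})=q_n$, and $d(q_{n+1},q_n)\le 2L(L')^{k(n+1)}\delta$. Because $(L')^k<1$, $(q_n)$ is Cauchy; its limit $q_*$ satisfies $g(q_*)=q_*$, i.e.\ $f^k(q_*)=q_*$, so $q_*\in Per(f)$, and
\[
d(q_*,x)\le d(q,x)+\sum_{n\ge1}d(q_n,q_{n-1})\le L\delta+2L\delta\,\frac{(L')^k}{1-(L')^k}\le L\delta\,\frac{1+L'}{1-L'}=\frac{L}{1-2L}\,\delta .
\]
Choosing $\delta\le\delta_0$ so small that $\frac{L}{1-2L}\delta<\epsilon_0$ then finishes the proof, since $\epsilon_0>0$ was arbitrary.

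The step I expect to be the real obstacle — or at least the one that has to be arranged with care — is the last display. Chain recurrence gives no bound at all on the period $k$ of the cycle, so the correction $d(q_*,q)$ had better be controlled independently of $k$; this is exactly what the uniform bound $(L')^k\le L'<1$ buys, since then $\sum_{n\ge1}(L')^{k(n+1)}\le L'/(1-L')$ no matter how large $k$ is. The remaining points — that $\xi$ is genuinely a $\delta$-pseudo orbit, that $L\delta$-shadowing applies for the chosen $\delta\le\delta_0$, and that the iterated ball-expanding inclusion for $f^k$ is legitimate — are routine verifications rather than genuine difficulties.
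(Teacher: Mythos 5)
Your proposal is correct, but it takes a genuinely different route from the paper's. The paper proves the theorem by iterated re-shadowing of cycles: it shadows the $\delta$-cycle to get $y_0$ with $d(f^k(y_0),y_0)\le 2L\delta$, treats $(y_0,f(y_0),\dots,f^{k-1}(y_0),y_0)$ as a $2L\delta$-cycle, shadows again, and so on; since $2L<1$, the resulting points form a Cauchy sequence whose limit is a fixed point of $f^k$ within $\frac{L}{1-2L}\delta$ of $x_0$. You instead shadow only once, convert $L<1/2$ into the ball-expanding inclusion $B_\delta(f(y))\subset f(B_{L'\delta}(y))$ with $L'=L/(1-L)<1$ via Remark 1.3(1) (whose proof rests on Theorem 1.1, not Theorem 1.2, so there is no circularity), iterate that inclusion to $g=f^k$ with the $k$-independent constant $(L')^k\le L'<1$, and then correct the almost fixed point $q$ of $g$ by chasing preimages; the limit of that contraction-type scheme is an exact fixed point of $f^k$, and your bookkeeping — the legitimacy of the iterated inclusion (all intermediate radii stay below $\delta_0$), the uniformity in $k$, the continuity argument giving $g(q_*)=q_*$, and the final estimate, which collapses to the same bound $\frac{L}{1-2L}\delta$ as the paper's — all checks out. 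What your route buys is a cleaner structural statement: any ball expanding map with constant less than $1$ satisfies $CR(f)=\overline{Per(f)}$, so the conclusion follows from the purely geometric hypothesis of type (2) in Theorem 1.6 and not only from Lipschitz shadowing. What the paper's route buys is that it stays entirely within the shadowing framework and its iteration is exactly the template reused in Theorem 1.5, where the ultrametric inequality replaces $2L\delta$ by $L\delta$ and the same argument works for every $L<1$.
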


For $M\ge1$, we say that a map $f\colon X\to X$ is {\em $M$-Lipschitz continuous} if there is $\delta_0>0$ such that 
\[
d(f(x),f(y))\le Md(x,y)
\]
for all $x,y\in X$ with $d(x,y)\le\delta_0$. The next theorem shows that $M^{-1}$ is a lower bound of $L>0$ such that an $M$-Lipschitz continuous map  $f\colon X\to X$ satisfying $X=CR(f)$ and the $L$-Lipschitz shadowing property is not trivial. Note that if $X$ is a finite set, then any map $f\colon X\to X$ is continuous and satisfies the $L$-Lipschitz shadowing property for all $L>0$.

\begin{thm}
Given $M\ge1$, if an $M$-Lipschitz continuous map $f\colon X\to X$ satisfies $X=CR(f)$ and has the $L$-Lipschitz shadowing property for some $0<L<M^{-1}$, then $X$ is a finite set.
\end{thm}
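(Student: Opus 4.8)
Suppose, for contradiction, that $X$ is infinite.

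I would begin by collecting the available tools. Since $L<M^{-1}\le 1$, $f$ has the contractive shadowing property, hence by Theorem 1.1 it has the h-shadowing property; moreover $X=CR(f)$ forces $f$ to be surjective, because the last edge of any $\delta$-cycle lands within $\delta$ of the closed set $f(X)$. Fix $\delta_0>0$ small enough to witness simultaneously the $L$-Lipschitz shadowing and the $M$-Lipschitz continuity, and recall from Remark 1.3(1) that
\[
B_\delta(f(x))\subset f\bigl(B_{L'\delta}(x)\bigr)\qquad(x\in X,\ 0<\delta\le\delta_0),\qquad L':=\frac{L}{1-L},
\]
while $f(B_r(x))\subset B_{Mr}(f(x))$ whenever $r\le\delta_0$.

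The engine of the argument is a kind of ``scale rigidity''. Composing the two inclusions and using surjectivity gives $B_\delta(z)\subset B_{ML'\delta}(z)$ around every $z\in X$, and iterating the ball-cover $n$ times before applying $M$-Lipschitz continuity yields more generally $B_\delta(z)\subset B_{(ML')^n\delta}(z)$ for every $n$ for which $(ML')^n\delta$ stays under the relevant threshold. Hence, as soon as one can force the ratio on the right to be strictly less than $1$ — for instance whenever $ML'<1$, i.e.\ $L<(M+1)^{-1}$ — one gets $B_\delta(z)=B_{(ML')^n\delta}(z)$ for all large $n$, so $z$ has no other point of $X$ within distance $\delta_0$; as $z$ is arbitrary and $X$ is compact, $X$ is finite, a contradiction. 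What then remains is to cover the whole range $L<M^{-1}$.

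The main obstacle is that the loss encoded in $L'=L/(1-L)$ cannot be beaten by inspecting a single point, so one has to use $X=CR(f)$. The plan is: pick (by infiniteness and compactness of $X$) distinct points $p,q$ with $r:=d(p,q)$ as small as desired, and build from a $\delta$-cycle at $p$ — by substituting $q$ for $p$ at one place — a periodic $\Delta$-pseudo orbit with $\Delta=Mr+\delta$ that returns $\Delta$-accurately to $p$ and to $q$ at the two ends of each period; an $L\Delta$-shadowing point is then $L\Delta$-close to $p$ and to $q$ at the corresponding times. Pulling these two returns back through the ball-cover iterated along the chains, and bounding the competing forward images by $M$-Lipschitz continuity, should produce an inequality of the form $r\le\kappa r+c(\delta)$ with $\kappa$ a fixed power of $ML<1$ and $c(\delta)\to0$; letting $\delta\to0$ then forces $r=0$, contradicting $p\ne q$. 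The delicate step is the bookkeeping that makes the net factor a power of $ML$ rather than of $ML'$: one must balance the length of the chain against the number of $M$-expanding steps so that the $\tfrac1{1-L}$-type losses telescope, and this is exactly where the strict inequality $L<M^{-1}$, rather than only $L<(M+1)^{-1}$, is consumed. It is convenient to reduce first — using that chain components are clopen under the shadowing property and that $CR(f^n)=CR(f)$ — to the case where $f$ is locally eventually onto (Remark 1.2(2)), which keeps the combinatorics of the chains under control.
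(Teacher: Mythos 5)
There is a genuine gap. Your ``scale rigidity'' engine, based on the inclusion $B_\delta(f(x))\subset f(B_{L'\delta}(x))$ with $L'=L/(1-L)$ from Remark 1.3, only yields $B_\delta(z)\subset B_{ML'\delta}(z)$, which is informative precisely when $ML'<1$, i.e.\ $L<(M+1)^{-1}$. You acknowledge this, but for the remaining range $(M+1)^{-1}\le L<M^{-1}$ what you offer is a plan, not a proof: the decisive step --- producing an inequality $r\le\kappa r+c(\delta)$ with $\kappa$ a power of $ML$ rather than of $ML'$, by ``telescoping the $\tfrac1{1-L}$-type losses'' --- is exactly the heart of the theorem and is left unexecuted. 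Nothing in your sketch explains how the $1/(1-L)$ loss inherent in a single shadowing step is avoided, and the auxiliary reduction you propose (passing to a power of $f$ to make it locally eventually onto) works against you: $f^n$ is only $M^n$-Lipschitz, and the hypothesis $L<M^{-1}$ does not improve correspondingly, so the assumption you need is not preserved.

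The missing idea, which is how the paper consumes $L<M^{-1}$, is an iterative re-shadowing anchored at the initial point of the chain. Given a $\delta$-chain $(x_i)_{i=0}^k$ with $\delta\le\delta_0$ and an $L\delta$-shadowing point $y_0$, one does not iterate ball inclusions; instead one forms the new chain $(x_0,f(y_0),\dots,f^{k-1}(y_0),x_k)$, which is an $ML\delta$-chain because $d(f(x_0),f(y_0))\le M\,d(x_0,y_0)\le ML\delta$, shadows it again, and repeats. This keeps the error anchored at $x_0$ and contracts it by the factor $ML<1$ at each stage, giving points $y_n$ with $\max\{d(y_n,x_0),d(f^k(y_n),x_k)\}\le L(ML)^n\delta\to0$, hence the exact identity $f^k(x_0)=x_k$ for \emph{every} $\delta$-chain with $\delta\le\delta_0$. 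Combined with $X=CR(f)$ this finishes quickly: take a $\delta_0/2$-cycle $(z_i)_{i=0}^m$ at $p$; for any $q$ with $d(p,q)\le\delta_0/2$ the sequence $(z_0,\dots,z_{m-1},q)$ is a $\delta_0$-chain, so $f^m(p)=q$, and taking $q=p$ gives $f^m(p)=p$; hence $q=p$, every point is isolated, and $X$ is finite. Your proximal-pair scheme might conceivably be completed, but as written it does not contain the contraction mechanism that replaces $ML'$ by $ML$, so the claimed range $0<L<M^{-1}$ is not established.
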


We say that a homeomorphism $f\colon X\to X$ is {\em equicontinuous} if for any $\epsilon>0$, there is $\delta>0$ such that $d(x,y)\le\delta$ implies
\[
\sup_{i\in\mathbb{Z}}d(f^i(x),f^i(y))\le\epsilon
\]
for all $x,y\in X$. By Theorem 6.1 of \cite{BGO}, we know that a homeomorphism $f\colon X\to X$ has the h-shadowing property if and only if $f$ is equicontinuous and $X$ is totally disconnected; therefore, for example, an odometer satisfies the h-shadowing property. In contrast, the next theorem shows that a homeomorphism $f\colon X\to X$ does not have the contractive shadowing property except for the degenerate case.

\begin{thm}
If a homeomorphism $f\colon X\to X$ has the contractive shadowing property, then $X$ is a finite set.
\end{thm}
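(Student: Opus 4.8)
\emph{Normal form.} I would first make $f$ rigid. By Theorem 1.1, $f$ has the h-shadowing property, so by Theorem 6.1 of \cite{BGO} the homeomorphism $f$ is equicontinuous and $X$ is totally disconnected. Equicontinuity already gives $X=CR(f)$: for $\delta>0$, compactness yields $m_1<m_2$ with $d(f^{m_1}(x),f^{m_2}(x))$ below the equicontinuity gauge at level $\delta$, hence $d(x,f^{m_2-m_1}(x))\le\delta$ and $x,f(x),\dots,f^{m_2-m_1-1}(x),x$ is a $\delta$-cycle. Equicontinuity also produces a compatible metric $D$ for which $f$ is an isometry (the classical fact for equicontinuous homeomorphisms); note, however, that the $L$-Lipschitz shadowing hypothesis is only available for the original metric $d$.

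\emph{The case $0<L<1/2$.} By Remark 1.3 there is $\delta_0>0$ with $B_\delta(f(x))\subset f(B_{L\delta/(1-L)}(x))$ for all $x\in X$ and $0<\delta\le\delta_0$. Since $L/(1-L)<1$, applying $f^{-1}$ shows $f^{-1}$ is a contraction on scale $\delta_0$, equivalently $d(f(u),f(v))\ge\frac{1-L}{L}d(u,v)$ whenever $d(f(u),f(v))\le\delta_0$. If $u\ne v$ with $d(u,v)$ below the equicontinuity gauge of $\delta_0$, then all $d(f^k(u),f^k(v))$ ($k\ge0$) stay $\le\delta_0$ and hence grow at least by the factor $(1-L)/L>1$ at each step, forcing $d(f^k(u),f^k(v))\to\infty$, which is impossible. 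Thus every point of $X$ is isolated, so $X$ is finite.

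\emph{The general case $0<L<1$.} Suppose $X$ is infinite; I would aim for a contradiction, imitating the mechanism on an odometer. Since $X$ is totally disconnected and $f$ is a $D$-isometry, the $D$-chain components at each scale $2^{-n}$ form an $f$-invariant refining sequence of clopen partitions $\mathcal Q_n$ with $\bigcap_n\mathcal Q_n$ equal to points, so the associated ultrametric $\rho$ is compatible and $f$ is a $\rho$-isometry. As $X$ is infinite, for some $n$ (with $2^{-n}$ as small as needed) a piece $B$ of $\mathcal Q_n$ splits into two pieces of $\mathcal Q_{n+1}$; choose $p,q\in B$ in distinct halves, so $\rho(f^k(p),f^k(q))=2^{-n}$ for all $k$, while $f^k(p),f^k(q)$ lie in distinct $\mathcal Q_{n+1}$-pieces. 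The sequence $x_i=f^i(p)$ for $i$ even, $x_i=f^i(q)$ for $i$ odd, is a pseudo-orbit of $f$ all of whose errors are at most the $D$-diameter of $B$, hence (shrinking $B$ and rescaling) a $\delta$-pseudo orbit with $0<\delta\le\delta_0$, so it is $L\delta$-shadowed in $d$ by some $z$. The rigidity I would invoke is the ultrametric dichotomy "inside a ball, distance $<\delta$ forces the same half-radius sub-ball"; granting the corresponding statement for $z$, the point $f^i(z)$ lies in the same $\mathcal Q_{n+1}$-piece as $f^i(p)$ for even $i$, so $\rho(z,p)\le 2^{-(n+1)}$ by the isometry, and then for odd $i$ the point $f^i(z)$ lies simultaneously in the $\mathcal Q_{n+1}$-pieces of $f^i(p)$ and of $x_i=f^i(q)$, contradicting that these are distinct. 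Hence $X$ is finite.

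\emph{Main obstacle.} The difficulty is exactly the last step of the general case: the shadowing estimate is delivered in the given metric $d$, whereas the rigidity (isometry, and the "sub-ball or $\ge\delta$ away" dichotomy) lives in the ultrametric $\rho$, and in general $d$ and $\rho$ are only uniformly equivalent, with $d$ comparable to a nonconstant power of $\rho$. Bridging this — for instance by choosing the clopen partitions so that distinct $\mathcal Q_{n+1}$-pieces are $d$-separated on a scale exceeding $L$ times the $d$-diameter of the $\mathcal Q_n$-pieces, or by building the bad pseudo-orbit entirely inside $d$-clopen balls so that no metric transfer is needed — is where the real work lies; everything preceding it is routine.
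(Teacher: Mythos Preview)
Your treatment of the case $0<L<1/2$ is correct and pleasantly direct: from the ball-expanding inclusion you extract $d(f(u),f(v))\ge\frac{1-L}{L}\,d(u,v)$ on small scales, and equicontinuity (which you correctly obtain via h-shadowing and \cite{BGO}) then forces every point to be isolated. This is a different and shorter route than the paper's for that special range of $L$.

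For $1/2\le L<1$, however, the proposal has a genuine gap, and you have located it precisely. All of the rigidity you invoke in the general case---the isometry, the clopen tower $\mathcal Q_n$, the ``same sub-ball or $\ge\delta$ apart'' dichotomy---lives in the auxiliary ultrametric $\rho$, while the only shadowing estimate available is an $L\delta$-bound in $d$. Uniform equivalence of $d$ and $\rho$ does not translate an $L\delta$-ball in $d$ into confinement to a $\mathcal Q_{n+1}$-piece, and neither of your suggested repairs can be executed without quantitative control on the $d$--$\rho$ comparison that total disconnectedness alone does not supply. In particular there is no reason the $d$-separation of distinct $\mathcal Q_{n+1}$-pieces should exceed $L$ times the $d$-diameter of a $\mathcal Q_n$-piece.

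The paper avoids any change of metric. After establishing $X=R(f)$ (via h-shadowing and $f^{-1}(\omega(x,f))=\omega(x,f)$), it proves directly in $d$ that $X=R(f)$ together with contractive shadowing forces $X$ finite (Lemma~2.1): given $p\ne q$ with $0<d(p,q)=\delta<\delta_0$ and $L<K<1$, one inductively builds $r_n$ with $\max\{d(p,r_n),d(q,r_n)\}\le K^n\delta$. The step shadows the concatenation of a short cycle at $p$, a short cycle at $r_n$, and then the cycle at $q$ repeated forever; the shadowing point $r_{n+1}$ is within $L(\epsilon+K^n\delta)<K^{n+1}\delta$ of $p$ at time $0$ and of $q$ along an arithmetic progression of times, and the key use of $r_{n+1}\in R(f)=R(f^k)$ pulls the $q$-estimate back to $r_{n+1}$ itself. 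Only $L<1$ is used, and no auxiliary metric enters---this is exactly the mechanism your argument is missing.
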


We consider the case where $d$ is an ultrametric. A metric $d$ on $X$ is called an {\em ultrametric} on $X$ if
\[
d(x,z)\le\max\{d(x,y),d(y,z)\}
\]
for all $x,y,z\in X$. We know that a compact metric space $X$ endowed with a metric $d$ is totally disconnected if and only if there is an ultrametic $D$ on $X$ equivalent to $d$.

When $d$ is an ultrametric on $X$, we can improve the constant $1/2$ in Theorem 1.2 to $1$, as the following theorem shows. 

\begin{thm}
Let $d$ be an ultrametric on $X$. If a continuous map $f\colon X\to X$ has the contractive shadowing property, then $f$ satisfies $CR(f)=\overline{Per(f)}$.
\end{thm}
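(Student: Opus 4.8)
The plan is to establish the inclusion $CR(f)\subset\overline{Per(f)}$; the reverse inclusion $\overline{Per(f)}\subset CR(f)$ holds for every continuous map since $Per(f)\subset CR(f)$ and $CR(f)$ is closed. Fix $0<L<1$ and $\delta_0>0$ witnessing the contractive shadowing property, and let $x\in CR(f)$. Given $0<\epsilon\le\delta_0$ (the case $\epsilon>\delta_0$ being reduced to $\epsilon=\delta_0$), I want to produce a periodic point of $f$ within distance $\epsilon$ of $x$. Set $\delta=\epsilon$. Since $x\in CR(f)$, there is a $\delta$-cycle $(x_i)_{i=0}^{k}$ of $f$ with $x_0=x_k=x$ and $k\ge1$; repeating it periodically yields a periodic $\delta$-pseudo orbit $\xi=(\xi_j)_{j\ge0}$ with $\xi_j=x_{j\bmod k}$ (a genuine $\delta$-pseudo orbit because $x_0=x_k$), and since $\delta\le\delta_0$ it is $L\delta$-shadowed by some $y_0\in X$.

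The heart of the argument is an iteration that ``closes up'' the orbit of $y_0$, with the ultrametric preventing accumulation of error. Suppose $y_n\in X$ is defined with closing error $a_n:=d(f^k(y_n),y_n)$; for $y_0$, since $\xi_0=\xi_k=x$ we get $d(f^k(y_0),x)\le L\delta$ and $d(y_0,x)\le L\delta$, hence $a_0\le L\delta$ by the ultrametric inequality. If $a_n=0$ then $y_n\in Per(f)$ and $d(y_n,x)\le L\delta<\epsilon$, so assume $a_n>0$. Let $\eta^{(n)}=(\eta^{(n)}_j)_{j\ge0}$ be the periodic sequence of period $k$ given by $\eta^{(n)}_j=f^{\,j\bmod k}(y_n)$; its transition errors vanish except at indices $j\equiv k-1\pmod k$, where the error equals $d(f^k(y_n),y_n)=a_n$, so $\eta^{(n)}$ is a periodic $a_n$-pseudo orbit. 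Since $a_n\le\delta_0$ (true for $n=0$ and maintained below), it is $La_n$-shadowed by some $y_{n+1}\in X$, so $d(y_{n+1},y_n)\le La_n$, and as $\eta^{(n)}_k=y_n$ the ultrametric inequality gives
\[
a_{n+1}=d(f^k(y_{n+1}),y_{n+1})\le\max\{d(f^k(y_{n+1}),y_n),\,d(y_n,y_{n+1})\}\le La_n .
\]
Inductively $a_n\le L^{n+1}\delta\le\delta_0$ and $d(y_{n+1},y_n)\le L^{n+2}\delta$ for all $n\ge0$.

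To finish, the ultrametric inequality gives $d(y_n,y_0)\le\max_{1\le m\le n}d(y_m,y_{m-1})\le L^2\delta$ for every $n$, and $(y_n)$ is Cauchy, so $y_n\to y_*$ for some $y_*\in X$. Since $d(f^k(y_n),y_n)=a_n\to0$, continuity of $f^k$ forces $f^k(y_*)=y_*$, i.e.\ $y_*\in Per(f)$; and $d(y_*,x)\le\max\{d(y_*,y_0),d(y_0,x)\}\le\max\{L^2\delta,L\delta\}=L\delta<\epsilon$. As $\epsilon$ is arbitrary this shows $x\in\overline{Per(f)}$, hence $CR(f)=\overline{Per(f)}$.

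The step I expect to demand the most care is the estimate on the closing errors: over a general metric the triangle inequality only yields $a_{n+1}\le2La_n$, which is exactly why Theorem 1.2 is restricted to $L<1/2$, whereas the ultrametric inequality replaces this sum by a maximum and gives $a_{n+1}\le La_n$ for every $0<L<1$. The rest is routine bookkeeping, mainly verifying that $\xi$ and each $\eta^{(n)}$ are $\delta'$-pseudo orbits with $\delta'\le\delta_0$, which follows from $a_n\le L^{n+1}\delta\le\delta\le\delta_0$.
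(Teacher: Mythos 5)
Your proposal is correct and follows essentially the same route as the paper: take a $\delta$-cycle through $x$, shadow its periodic extension, and iterate the closing-up procedure, using the ultrametric inequality to get the closing error to contract by the factor $L$ (rather than $2L$) at each step, so that the limit is a periodic point within $L\delta$ of $x$. Your concluding remark correctly identifies the exact point where the ultrametric improves the constant $1/2$ of Theorem 1.2 to $1$.
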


The following theorem shows that when $d$ is an ultrametric on $X$ and $0<L<1$, the $L$-Lipschitz shadowing property is equivalent to ball expanding for the same 
$L$.

\begin{thm}
Let $d$ be an ultrametric on $X$. For $0<L<1$ and a continuous map $f\colon X\to X$, the following conditions are equivalent
\begin{enumerate}
\item $f$ has the $L$-Lipschitz shadowing property,
\item there is $\delta_0>0$ such that
\[
B_\delta(f(x))\subset f(B_{L\delta}(x)) 
\]  
for all $0<\delta\le\delta_0$ and $x\in X$.
\end{enumerate}
\end{thm}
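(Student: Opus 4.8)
The plan is to prove the two implications separately. The implication (2) $\Rightarrow$ (1) is a reprise of Lemma 1.1 once the ultrametric inequality is invoked, whereas (1) $\Rightarrow$ (2), which rests on a convergence argument, is the part requiring care.

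\emph{Proof of (2) $\Rightarrow$ (1).} First I would observe that if $d(f(x),y)\le\delta$ with $0<\delta\le\delta_0$, then for any $z\in B_{L\delta}(y)$ the ultrametric inequality gives $d(f(x),z)\le\max\{d(f(x),y),d(y,z)\}\le\max\{\delta,L\delta\}=\delta$ (here $0<L<1$ is used), so $z\in B_\delta(f(x))\subset f(B_{L\delta}(x))$; hence $B_{L\delta}(y)\subset f(B_{L\delta}(x))$ whenever $d(f(x),y)\le\delta$ and $0<\delta\le\delta_0$. For a $\delta$-pseudo orbit $\xi=(x_i)_{i\ge0}$ with $0<\delta\le\delta_0$ this yields $B_{L\delta}(x_{i+1})\subset f(B_{L\delta}(x_i))$ for all $i\ge0$; then, exactly as in the proof of Lemma 1.1 (build the orbit segments backwards from $B_{L\delta}(x_k)$), each finite intersection $\bigcap_{i=0}^{k}f^{-i}(B_{L\delta}(x_i))$ is nonempty, so by compactness $\bigcap_{i\ge0}f^{-i}(B_{L\delta}(x_i))\ne\emptyset$, and any point of this set $L\delta$-shadows $\xi$. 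Thus $f$ has the $L$-Lipschitz shadowing property.

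\emph{Proof of (1) $\Rightarrow$ (2).} Take $\delta_0>0$ from the definition of the $L$-Lipschitz shadowing property, fix $0<\delta\le\delta_0$, $x\in X$, and $y\in B_\delta(f(x))$; I must produce $z\in B_{L\delta}(x)$ with $f(z)=y$. I would construct it by iterating shadowing: put $z_0=x$, so $d(f(z_0),y)\le\delta$; given $z_n$ with $d(f(z_n),y)\le L^n\delta$, apply the $L$-Lipschitz shadowing property to the $(L^n\delta)$-pseudo orbit $z_n,\,y,\,f(y),\,f^2(y),\dots$ (legitimate since $L^n\delta\le\delta_0$) to obtain $z_{n+1}$ with $d(z_{n+1},z_n)\le L^{n+1}\delta$ and $d(f(z_{n+1}),y)\le L^{n+1}\delta$. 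By the ultrametric inequality, $d(z_n,z_m)\le L^{\min\{n,m\}+1}\delta$, so $(z_n)$ is Cauchy and converges (by compactness, hence completeness, of $X$) to some $z_\ast\in X$; then $f(z_\ast)=\lim_n f(z_n)=y$ because $d(f(z_n),y)\le L^n\delta\to0$, and for every $n$, $d(z_\ast,x)\le\max\{d(z_\ast,z_n),d(z_n,z_0)\}\le\max\{d(z_\ast,z_n),L\delta\}$, whence $d(z_\ast,x)\le L\delta$. Thus $y\in f(B_{L\delta}(x))$, and since $x$, $\delta$, $y$ were arbitrary, (2) holds with the same $\delta_0$.

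The main obstacle is the implication (1) $\Rightarrow$ (2): a single application of shadowing to $x,y,f(y),\dots$ only yields a point $z$ with $d(f(z),y)\le L\delta$, not $f(z)=y$, so one is forced to iterate and pass to a limit; the ultrametric is precisely what keeps the accumulated corrections $d(z_{n+1},z_n)\le L^{n+1}\delta$ from exceeding $L\delta$ (in a general metric they would only sum to $\tfrac{L}{1-L}\delta$, which is the reason the constant in Theorem 1.2 is $1/2$ rather than $1$). The remaining points are routine: the bound $d(z_n,z_0)\le L\delta$ from the ultrametric inequality and the continuity of $f$ for $f(z_\ast)=y$.
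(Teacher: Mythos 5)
Your proposal is correct and follows essentially the same route as the paper: for $(1)\Rightarrow(2)$ the same iterated-shadowing construction of a Cauchy sequence $(z_n)$ with the ultrametric inequality collapsing the corrections to $d(z_\ast,x)\le L\delta$ (rather than the geometric sum $\tfrac{L}{1-L}\delta$), and for $(2)\Rightarrow(1)$ the same ultrametric refinement of Lemma 1.1 giving $B_{L\delta}(y)\subset f(B_{L\delta}(x))$ and a compactness argument on the nested intersections. The only cosmetic difference is that you make explicit the extension of the finite chain $(z_n,y)$ to the infinite pseudo orbit $z_n,y,f(y),f^2(y),\dots$, which the paper leaves implicit.
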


We discuss the spectral decomposition of the chain recurrent set. From Theorem 3.4.4 of \cite{AH}, we know that if a continuous surjection $f\colon X\to X$ is c-expansive and has the shadowing property, then $CR(f)$ admits a decomposition into finitely many {\em basic sets} (due to Smale) and every basic set admits a cyclic decomposition into finitely many {\em elementary sets} (due to Bowen). We shall show that an analogous result holds for any continuous map $f\colon X\to X$ with the contractive shadowing property.

As the following theorem shows, for $L>0$, the $L$-Lipschitz shadowing is not lost by restricting the map to its chain recurrent set.

\begin{thm}
For $L>0$, if a continuous map $f\colon X\to X$ has the $L$-Lipschitz shadowing property, then
\[
f|_{CR(f)}\colon CR(f)\to CR(f)
\]
also satisfies the $L$-Lipschitz shadowing property.
\end{thm}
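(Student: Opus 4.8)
The plan is to first make the restriction legitimate and then transfer the shadowing property. One checks that $CR(f)$ is nonempty, closed, and forward invariant: given $x\in CR(f)$ and $\eta>0$, pick $\eta'\in(0,\eta]$ with $d(f(a),f(b))\le\eta$ whenever $d(a,b)\le\eta'$, take an $\eta'$-cycle $x=x_0,x_1,\dots,x_k=x$ of $f$, and note that $f(x_0),f(x_1),\dots,f(x_k)=f(x)$ is an $\eta$-cycle of $f$ through $f(x)$; thus $f(CR(f))\subseteq CR(f)$, so $g:=f|_{CR(f)}\colon CR(f)\to CR(f)$ is a continuous self-map of a compact metric space, and a $\delta$-pseudo orbit of $g$ is exactly a $\delta$-pseudo orbit of $f$ all of whose terms lie in $CR(f)$. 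Let $\delta_0>0$ witness the $L$-Lipschitz shadowing of $f$. I would show that the same $\delta_0$ works for $g$: fix $0<\delta\le\delta_0$ and a $\delta$-pseudo orbit $\xi=(x_i)_{i\ge0}$ of $g$. Since $\xi$ is a $\delta$-pseudo orbit of $f$, it is $L\delta$-shadowed by some point of $X$; the whole point is to produce a shadowing point inside $CR(f)$.

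To do this I would proceed by an approximation and compactness argument. For each $n\ge1$, using $x_n\in CR(f)$ and, for each small $\eta>0$, an $\eta$-cycle of $f$ based at $x_n$, form the $\delta$-pseudo orbit $\xi_{n,\eta}$ of $f$ which follows $x_0,\dots,x_n$ and thereafter loops this cycle; applying the $L$-Lipschitz shadowing of $f$ gives $y_{n,\eta}\in X$ with $d(f^i(y_{n,\eta}),x_i)\le L\delta$ for $0\le i\le n$, while the forward orbit of $f^n(y_{n,\eta})$ stays within $L\delta$ of the chosen cycle (so its $\omega$-limit, which in any case lies in $CR(f)$, is within $L\delta$ of $x_n$). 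Letting $\eta\to0$ and then $n\to\infty$ along subsequences --- legitimate because $X$ is compact, each $B_{L\delta}(x_i)$ is closed, and each $f^i$ is continuous --- one extracts $y\in X$ with $d(f^i(y),x_i)\le L\delta$ for all $i\ge0$, i.e.\ $y$ $L\delta$-shadows $\xi$. It then remains to verify that $y\in CR(f)$, that is, that for every $\eta>0$ there is an $\eta$-cycle of $f$ through $y$; here one must use that \emph{every} $x_i$, not only $x_0$, belongs to $CR(f)$, routing a long forward segment of the orbit of $y$ along $\xi$ and then closing it up by chain-recurrence loops of vanishing mesh at the $x_i$.

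The main obstacle is precisely this final verification. The easy estimates only yield that the whole forward orbit of $y$ stays within distance $L\delta$ of $CR(f)$; closing up the orbit once it re-enters this neighbourhood places $y$ in $CR_{cL\delta}(f)$ for a fixed constant $c$ depending on $L$, but \emph{not} in $CR(f)=\bigcap_{\eta>0}CR_\eta(f)$, since the shadowing error $L\delta$ does not shrink once $\delta$ is fixed. The natural candidates for a shadowing point in $CR(f)$ --- points of $\omega(y)$ --- only shadow shift-limits of $\xi$ rather than $\xi$ itself, so this does not immediately help either. Overcoming this requires genuinely exploiting the hypothesis that $f$ has the shadowing property along the whole orbit (which rules out the configurations that would otherwise obstruct closing up, such as a $\delta$-pseudo orbit in $CR(f)$ running between far-apart chain classes) together with a careful limiting argument that closes the orbit of $y$ at arbitrarily small scales; this is the technical heart of the proof.
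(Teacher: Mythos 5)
There is a genuine gap, and you have located it yourself: your scheme produces some $y\in X$ that $L\delta$-shadows $\xi$, but nothing in the construction forces $y\in CR(f)$, and the obstruction you describe (the error $L\delta$ does not shrink, so at best $y$ lies in an $L\delta$-neighbourhood of $CR(f)$, i.e.\ in some $CR_{cL\delta}(f)$) is real. The final sentence of your proposal gestures at ``a careful limiting argument that closes the orbit of $y$ at arbitrarily small scales,'' but no such argument is supplied, and the strategy of fixing a shadowing point first and then certifying its chain recurrence afterwards is exactly what does not work here: a shadowing point of a pseudo orbit lying in $CR(f)$ need not itself be chain recurrent, and shadowing at the fixed scale $L\delta$ gives no control at smaller scales.

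The paper sidesteps this by choosing the shadowing point differently rather than by repairing your verification. Since $d(f(x_i),x_{i+1})\le\delta$ and any two points of $CR(f)$ at distance at most $\delta$ are equivalent under the relation $\leftrightarrow_\delta$, all terms $x_i$ lie in a single class $C\in\mathcal{C}_\delta(f)$. Hence for each $k$ there is a $\delta$-chain $(y_0,\dots,y_l)$ of $f$ from $x_k$ back to $x_0$, and one can form the \emph{periodic} $\delta$-pseudo orbit $\xi_k$ repeating the block $x_0,\dots,x_{k-1},y_0,\dots,y_{l-1}$. The set $X_k$ of points that $L\delta$-shadow $\xi_k$ is nonempty (by the $L$-Lipschitz shadowing of $f$), closed, and $f^{k+l}$-invariant because $\xi_k$ has period $k+l$; therefore it contains a point of $M(f^{k+l})$, and minimal points of $f^{k+l}$ lie in $CR(f)$. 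This yields $CR(f)\cap\bigcap_{i=0}^{k}f^{-i}\bigl(B_{L\delta}(x_i)\bigr)\ne\emptyset$ for every $k$, and compactness of these nested sets gives a point of $CR(f)$ that $L\delta$-shadows the whole of $\xi$. So the missing idea is not a finer closing-up of the orbit of your $y$; it is to make the auxiliary pseudo orbit periodic inside the class $C$ and to extract a chain-recurrent shadowing point via invariance of $X_k$ and the existence of minimal points, after which your compactness step (which is essentially the same finite-intersection argument) finishes the proof.
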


The next two theorems describe the decomposition of $CR(f)$ for any continuous map $f\colon X\to X$ with the contractive shadowing property (precise definitions and notations are given in Section 4). 

\begin{thm}
For $0<L<1$, if a continuous map $f\colon X\to X$ has the $L$-Lipschitz shadowing property, then $\mathcal{C}(f)$ is a finite set and for all $C\in\mathcal{C}(f)$,
\[
f|_{C}\colon C\to C
\]
satisfies the $L$-Lipschitz shadowing property.
\end{thm}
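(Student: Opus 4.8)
The plan is to reduce to the chain recurrent set, show that every chain component is a clopen subset of $CR(f)$, deduce finiteness by compactness, and then transfer the $L$-Lipschitz shadowing property to each component.

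First I would apply Theorem 1.7 to replace $f$ by $g=f|_{CR(f)}$, which still has the $L$-Lipschitz shadowing property; since $CR(g)=CR(f)$ and the chain relation on $CR(f)$ is unaffected by replacing $f$ with $g$, we have $\mathcal C(g)=\mathcal C(f)$, so it suffices to treat the case $X=CR(f)$, i.e. every point of $X$ is chain recurrent. In this case $f$ is a surjection, and by Theorem 1.1 together with Remark 1.2(1) it is an open map. I would also record the elementary fact that $f(C)\subseteq C$ for every $C\in\mathcal C(f)$: for $x\in CR(f)$ one has $x\to f(x)$ via the one-step chain $(x,f(x))$, while $f(x)\to x$ follows by modifying an arbitrarily fine cycle through $x$ and using the uniform continuity of $f$; hence $f(x)\leftrightarrow x$ and $f(x)\in C$.

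The core step, which I expect to be the main obstacle, is to produce $\rho>0$ such that $x,y\in X$ with $d(x,y)\le\rho$ forces $x\leftrightarrow y$; this makes every $C\in\mathcal C(f)$ open in $X$ (it is automatically closed). Here I would exploit the exact-endpoint feature of h-shadowing (Theorem 1.1): a sufficiently fine chain from $x$ to $y$ is realized by a genuine orbit segment $w,f(w),\dots,f^{k}(w)=y$ with $w$ as close to $x$ as desired; combining this with the openness of each iterate $f^{k}$ and with the ball-covering estimate $B_\delta(f(x))\subseteq f\big(B_{\frac{L}{1-L}\delta}(x)\big)$ from Remark 1.3(1), one should be able to connect $x$ to every nearby point of $X$, and conversely, by arbitrarily fine chains, with $\rho$ chosen independently of the accuracy demanded. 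Securing this scale-uniformity is the delicate point: when $L<1/2$ the constant $L/(1-L)$ is less than $1$, $f$ is ball expanding, the estimate iterates without loss of control of the radii, and in fact $CR(f)=\overline{Per(f)}$ by Theorem 1.2; but when $1/2\le L<1$ the constant is at least $1$, naive iteration breaks down, and one must lean on the exact endpoints furnished by h-shadowing together with the surjectivity and openness of $g$. An alternative route I would try is to show that the $L$-Lipschitz shadowing inequality with $L<1$ forces a c-expansiveness-type estimate for $f$, so that the entire spectral decomposition follows from \cite{AH}; extracting that estimate is then the crux.

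Granting clopenness, the members of $\mathcal C(f)$ form a partition of the compact space $X=CR(f)$ into open sets, so $\mathcal C(f)$ is finite and $\rho_0:=\min\{d(C',C'')\colon C',C''\in\mathcal C(f),\ C'\ne C''\}>0$ (with no constraint imposed when $\mathcal C(f)$ is a singleton). Fix $C\in\mathcal C(f)$; it is compact and $f(C)\subseteq C$, so $f|_{C}\colon C\to C$ is a well-defined continuous map. Let $\delta_0$ be a Lipschitz-shadowing constant for $f$ on $X$ and take $0<\delta\le\min\{\delta_0,\rho_0/L\}$. A $\delta$-pseudo orbit $(x_i)_{i\ge0}$ in $C$ is a $\delta$-pseudo orbit of $f$ on $X$, hence is $L\delta$-shadowed by some $z\in X=CR(f)$. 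Since $d(z,x_0)\le L\delta<\rho_0$ and $x_0\in C$, the chain component of $z$ lies within $\rho_0$ of $C$, forcing $z\in C$; and $f(C)\subseteq C$ keeps the whole forward orbit of $z$ in $C$. Thus $z$ witnesses $L\delta$-shadowing inside $C$, so $f|_{C}$ has the $L$-Lipschitz shadowing property.
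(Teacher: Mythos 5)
Your reduction via Theorem 1.7 to the case $X=CR(f)$ and your final transfer step (finiteness plus clopen, $f$-invariant components at positive mutual distance implies $L$-Lipschitz shadowing of each $f|_C$) are sound and match what the paper does. But the heart of the theorem --- finiteness of $\mathcal{C}(f)$ --- is not proved in your proposal: you reformulate it as finding a uniform $\rho>0$ with $d(x,y)\le\rho\Rightarrow x\leftrightarrow y$, flag the ``scale-uniformity'' as the delicate point, and then only offer directions (h-shadowing with exact endpoints, openness of $f$, the ball estimate $B_\delta(f(x))\subset f(B_{\frac{L}{1-L}\delta}(x))$, or a hoped-for c-expansiveness-type estimate) without carrying any of them out; you yourself concede that for $1/2\le L<1$ the iteration you have in mind breaks down. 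The c-expansiveness route is moreover a dead end in general: contractive shadowing does not force any expansivity (Example 5.8 of the paper has the $L$-Lipschitz shadowing property for every $L>0$ but is not positively expansive). So the statement is asserted essentially by reducing it to an equivalent unproved claim.

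The paper closes exactly this gap by working at a fixed scale rather than with $\leftrightarrow$ itself. For each $\delta>0$ the relation $\leftrightarrow_\delta$ on $CR(f)$ (mutual $\delta$-chain connectibility) has finitely many classes, each clopen and $f$-invariant, and any two chain recurrent points within distance $\delta$ are already $\leftrightarrow_\delta$-equivalent. If $\mathcal{C}(f)$ were infinite, these partitions would refine without bound, so for some $\delta$ the minimal distance $\Delta$ between distinct classes of $\mathcal{C}_\delta(f)$ satisfies $0<\Delta\le\delta_0$. Choosing $p\in C_0$, $q\in C_0'$ realizing $\Delta$ and $r\in C_0$ with $f(r)=p$ (using surjectivity of $f|_{CR(f)}$), the pair $(r,q)$ is a $\Delta$-chain, and a single application of $L$-Lipschitz shadowing on $CR(f)$ (Theorem 1.7) gives $x\in CR(f)$ with $\max\{d(x,r),d(f(x),q)\}\le L\Delta<\Delta$; by minimality of $\Delta$ this forces $x\in C_0$ and $f(x)\in C_0'$, contradicting $f$-invariance of $C_0$. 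The whole difficulty you identified is dissolved by the strict inequality $L\Delta<\Delta$, i.e.\ precisely by $L<1$, applied once at the critical scale --- no iteration, no uniform $\rho$ for $\leftrightarrow$, and no expansivity is needed. If you want to salvage your outline, replace your clopenness claim for $\leftrightarrow$ by this contradiction argument at the level of the finite decompositions $\mathcal{C}_\delta(f)$; the rest of your write-up then goes through.
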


\begin{thm}
For $0<L<1$, if a continuous map $f\colon X\to X$ is chain transitive and has the $L$-Lipschitz shadowing property, then $\mathcal{D}(f)$ is a finite set and for all $D\in\mathcal{D}(f)$,
\[
f^{|\mathcal{D}(f)|}|_{D}\colon D\to D
\]
satisfies the $L$-Lipschitz shadowing property.
\end{thm}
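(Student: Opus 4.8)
The plan is to reduce the statement to Theorem~1.8 applied to a suitable power of $f$, the key tool being the elementary fact that powers of $f$ inherit the same Lipschitz shadowing constant: for every integer $m\ge1$, $f^m\colon X\to X$ has the $L$-Lipschitz shadowing property with the same constant $L$ and the same threshold $\delta_0$. Indeed, given a $\delta$-pseudo orbit $(y_j)_{j\ge0}$ of $f^m$ with $0<\delta\le\delta_0$, put $z_{mj+r}=f^r(y_j)$ for $j\ge0$ and $0\le r\le m-1$; then $d(f(z_i),z_{i+1})=0$ whenever $i\not\equiv m-1\pmod m$, while $d(f(z_{m(j+1)-1}),z_{m(j+1)})=d(f^m(y_j),y_{j+1})\le\delta$, so $(z_i)_{i\ge0}$ is a $\delta$-pseudo orbit of $f$; if $x\in X$ satisfies $d(f^i(x),z_i)\le L\delta$ for all $i\ge0$, then in particular $d((f^m)^j(x),y_j)\le L\delta$ for all $j\ge0$. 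The same lifting shows that $f^m$ inherits from $f$, with $m$-independent constants, whatever quantitative shadowing data one needs (for instance an h-shadowing modulus).

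Granting for the moment that $n:=|\mathcal{D}(f)|$ is finite, the theorem follows quickly. By the construction of the cyclic decomposition of a chain transitive map in Section~4, the elements of $\mathcal{D}(f)$ are precisely the chain components of $f^n$; these partition $X=CR(f^n)$, each $D\in\mathcal{D}(f)$ satisfies $f^n(D)=D$, the map $f$ permutes them cyclically, and $f^n|_D$ is chain mixing, so $\mathcal{C}(f^n)=\mathcal{D}(f)$. By the first paragraph $f^n$ has the $L$-Lipschitz shadowing property, so Theorem~1.8 applied to $f^n$ gives that $\mathcal{C}(f^n)$ is finite and that $f^n|_C$ has the $L$-Lipschitz shadowing property for every $C\in\mathcal{C}(f^n)$. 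Since $\mathcal{C}(f^n)=\mathcal{D}(f)$, this is exactly the conclusion. (One could first restrict via Theorem~1.7, but here $CR(f^n)=X$ so it is unnecessary.)

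The remaining point, which I expect to be the main obstacle, is the finiteness of $\mathcal{D}(f)$. Since $f$ is chain transitive we have $CR(f^m)=X$ for every $m$, and $\mathcal{D}(f)$ is the common refinement of the partitions $\mathcal{C}(f^m)$, $m\ge1$; hence $\mathcal{D}(f)$ is finite exactly when $\sup_{m\ge1}|\mathcal{C}(f^m)|<\infty$, in which case $|\mathcal{D}(f)|=\sup_{m\ge1}|\mathcal{C}(f^m)|$. By the first paragraph each $f^m$ has the $L$-Lipschitz shadowing property with the same threshold $\delta_0$, so Theorem~1.8 already yields $|\mathcal{C}(f^m)|<\infty$ for every $m$; what is missing is a bound independent of $m$. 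For this I would revisit the proof of Theorem~1.8 and verify that it produces a bound on the number of chain components governed only by the constant $L$, the threshold $\delta_0$, and (if the argument uses it) an h-shadowing modulus — equivalently, that two distinct chain components of a map with the $L$-Lipschitz shadowing property with threshold $\delta_0$ are $\rho$-separated for some $\rho>0$ — and that these data, hence $\rho$, can be taken uniform over the family $\{f^m\}_{m\ge1}$ by the lifting of the first paragraph. Such a uniform bound forces $\sup_m|\mathcal{C}(f^m)|<\infty$, so $\mathcal{D}(f)$ is finite.

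Finally, if $f$ happens to be a homeomorphism then $X$ is finite by Theorem~1.4, so $\mathcal{D}(f)$ is trivially finite and any self-map of a finite set has the $L$-Lipschitz shadowing property; thus the substantive case is that of a non-injective $f$, though the argument above does not require this distinction. To summarize, the ingredients are: the pseudo-orbit lifting through powers of $f$; the uniform control on the number of chain components extracted from the proof of Theorem~1.8 (the delicate step); and the application of Theorem~1.8 to $f^n$ once $n$ is known to be finite.
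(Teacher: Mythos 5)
Your reduction of the restriction statement to the power-lifting fact is fine: the interpolation argument correctly shows that $f^m$ inherits the $L$-Lipschitz shadowing property with the \emph{same} $L$ and the same threshold $\delta_0$, and this is exactly how the paper finishes (once $\mathcal{D}(f)$ is known to be finite, each $D$ is clopen and $f^m$-invariant, so $f^m|_D$ inherits $L$-Lipschitz shadowing directly; your detour through $\mathcal{C}(f^n)=\mathcal{D}(f)$ also works but needs the short extra argument that distinct classes of $\mathcal{D}(f)$ lie in distinct chain components of $f^n$). The genuine gap is precisely where you flag it: the finiteness of $\mathcal{D}(f)$, which is the main content of the theorem, is not proved but deferred to an unperformed verification that the proof of Theorem 1.8 yields a separation constant for chain components that is uniform over the family $\{f^m\}_{m\ge1}$. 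Moreover, two of the supporting claims are themselves nontrivial and left unjustified: that $\mathcal{D}(f)$ is the common refinement of the partitions $\mathcal{C}(f^m)$ (this needs property (P4) plus interpolation between chains of $f$ and of $f^m$, in both directions), and that a uniform bound on $|\mathcal{C}(f^m)|$ bounds $|\mathcal{D}(f)|$ — for an arbitrary family of partitions this is false, and here one must use that the family is directed under refinement (pass to $f^{\mathrm{lcm}}$) to rule out blow-up in the common refinement. As written, then, the argument is a plausible programme rather than a proof.

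For the record, the missing verification would succeed: the contradiction step in the proof of Theorem 1.8 shows that for every $\delta>0$ the minimal distance between distinct classes of $\mathcal{C}_\delta$ cannot lie in $(0,\delta_0]$, so distinct chain components of any map with the $L$-Lipschitz shadowing property with threshold $\delta_0$ are more than $\delta_0$ apart, and since your lifting preserves $\delta_0$, $|\mathcal{C}(f^m)|$ is bounded by the maximal cardinality of a $\delta_0$-separated subset of $X$, uniformly in $m$. The paper avoids this whole detour: it runs the Theorem 1.8 argument directly on the $\delta$-cyclic decompositions $\mathcal{D}_\delta(f)$ of the chain transitive map $f$. If $\mathcal{D}(f)$ were infinite, some $\delta$ gives a minimal gap $\Delta\le\delta_0$ between distinct classes of $\mathcal{D}_\delta(f)$; choosing a closest pair $p\in D_0$, $q\in D_0'$, a class $D_0''$ with $f(D_0'')=D_0$ and $r\in D_0''$ with $f(r)=p$, the $\Delta$-chain $(r,q)$ is $L\Delta$-shadowed, and the shadowing point $x$ must satisfy $x\in D_0''$ and $f(x)\in D_0'$, contradicting $f(x)\in D_0$. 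This single step gives finiteness directly, after which the power lifting and clopenness of the classes yield the restriction statement — so your route is salvageable but both longer and, at present, incomplete at its decisive point.
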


\begin{rem}
\normalfont
\begin{enumerate}
\item Let $X=\{0,1\}^\mathbb{N}$ and let $\sigma\colon X\to X$ be the shift map. Since $\sigma$ is positively expansive and has the shadowing property, by Proposition 4.1 of \cite{BGO}, $\sigma$ has the h-shadowing property. Let $Y$ be a Cantor set in $[0,1]$ and let $id_Y\colon Y\to Y$ be the identity map. Since $id_Y$ is an equicontinuous homeomorphism and $Y$ is totally disconnected, by Proposition 6.1 of \cite{BGO}, $id_Y$ has the h-shadowing property. It follows that
\[
\sigma\times id_Y\colon X\times Y\to X\times Y
\]
satisfies the h-shadowing property, but we have
\[
\mathcal{C}(\sigma\times id_Y)=\{X\times\{y\}\colon y\in Y\},
\]
which is an uncountable set.
\item Let $g\colon X_m\to X_m$ be an odometer. Since $g$ is an equicontinuous homeomorphism and $X_m$ is totally disconnected, by Proposition 6.1 of \cite{BGO}, $g_m$ has the h-shadowing property, but we have
\[
\mathcal{D}(g)=\{\{x\}\colon x\in X_m\},
\]
which is an uncountable set.
\end{enumerate}
\end{rem}

This paper consists of five sections. In Section 2, we prove Theorems 1.1, 1.2, 1.3, and 1.4. In Section 3, we prove Theorems 1.5 and 1.6. In Section 4,  we prove Theorems 1.7, 1.8, and 1.9; and by Theorems 1.8 and 1.9, we provide an alternative proof of Theorem 1.4. Finally, in Section 5, we give several examples. 

\section{Proofs of Theorems 1.1, 1.2, 1.3 and 1.4}

In this section, we prove Theorems 1.1, 1.2, 1.3, and 1.4. First, we prove Theorem 1.1.

\begin{proof}[Proof of Theorem 1.1]
Since $f$ has the contractive shadowing property, there are $0<L<1$ and $\delta_0>0$ such that for any $0<\delta\le\delta_0$, every $\delta$-pseudo orbit of $f$ is $L\delta$-shadowed by some point of $X$. Fix $0<\delta\le\delta_0$ and a $\delta$-chain $(x_i)_{i=0}^k$ of $f$. We have $y_0\in X$ with
\[
d(f^i(y_0),x_i)\le L\delta
\]
for all $0\le i\le k$. Since $d(f^k(y_0),x_k)\le L\delta$,
\[
(y_0,f(y_0),\dots,f^{k-1}(y_0),x_k)
\]
is an $L\delta$-chain of $f$, so we have $y_1\in X$ with
\[
d(f^i(y_1),f^i(y_0))\le L^2\delta
\]
for all $0\le i\le k-1$ and $d(f^k(y_1),x_k)\le L^2\delta$. Since
\[
(y_1,f(y_1),\dots,f^{k-1}(y_1),x_k)
\]
is an $L^2\delta$-chain of $f$, we have $y_2\in X$ with
\[
d(f^i(y_2),f^i(y_1))\le L^3\delta
\]
for all $0\le i\le k-1$ and $d(f^k(y_2),x_k)\le L^3\delta$. Proceeding inductively, we obtain a sequence $y_n\in X$, $n\ge0$, satisfying for every $n\ge0$, $d(f^k(y_n),x_k)\le L^{n+1}\delta$ and
\[
d(f^i(y_{n+1}),f^i(y_n))\le L^{n+2}\delta
\]
for all $0\le i\le k-1$. Because
\[
d(y_0,x_0)+\sum_{n=0}^\infty d(y_{n+1},y_n)\le L\delta+\sum_{n=0}^\infty L^{n+2}\delta=\frac{L}{1-L}\delta<\infty,
\]
$(y_n)_{n\ge0}$ is a Cauchy sequence and so $\lim_{n\to\infty}y_n=x$ for some $x\in X$. Since
\[
d(f^i(y_N),x_i)\le d(f^i(y_0),x_i)+\sum_{n=0}^{N-1}d(f^i(y_{n+1}),f^i(y_n))\le L\delta+\sum_{n=0}^\infty L^{n+2}\delta=\frac{L}{1-L}\delta
\]
for all $N\ge1$ and $0\le i\le k-1$, letting $N\to\infty$, we obtain
\[
d(f^i(x),x_i)\le\frac{L}{1-L}\delta
\]
for all $0\le i\le k-1$. Moreover, by $d(f^k(y_n),x_k)\le L^{n+1}\delta$ for all $n\ge0$, letting $n\to\infty$, we obtain $f^k(x)=x_k$. This shows that $f$ satisfies the h-shadowing property, completing the proof.
\end{proof}

\begin{rem}
\normalfont
Given a continuous map $f\colon X\to X$, $0<L<1$, and $\delta_0>0$, assume that for any $0<\delta\le\delta_0$, every $\delta$-pseudo orbit of $f$ is $L\delta$-shadowed by some point of $X$. Let $0<\delta\le\delta_0$. Then, for any $x,y\in X$ with $d(f(x),y)\le\delta$, since $(x,y)$ is a $\delta$-chain of $f$, as shown in the above proof of Theorem 1.1, we have $f(z)=y$ for some $z\in X$ with
\[
d(z,x)\le\frac{L}{1-L}\delta.
\]
Thus, we obtain
\[
B_\delta(f(x))\subset f(B_{\frac{L}{1-L}\delta}(x)) 
\]
for all $x\in X$ and $0<\delta\le\delta_0$.
\end{rem}

Next, we prove Theorem 1.2.

\begin{proof}[Proof of Theorem 1.2]
By assumption, we have $0<L<1/2$ and $\delta_0>0$ such that for any $0<\delta\le\delta_0$, every $\delta$-pseudo orbit of $f$ is $L\delta$-shadowed by some point of $X$. Fix $0<\delta\le\delta_0$ and a $\delta$-cycle $(x_i)_{i=0}^k$ of $f$. We have $y_0\in X$ with
\[
d(f^i(y_0),x_i)\le L\delta
\]
for all $0\le i\le k$. From $x_0=x_k$, it follows that
\[
d(f^k(y_0),y_0)\le d(f^k(y_0),x_k)+d(y_0,x_0)\le L\delta+L\delta=2L\delta.
\]
Since 
\[
(y_0,f^i(y_0),\dots,f^{k-1}(y_0),y_0)
\]
is a $2L\delta$-cycle of $f$, we have
\[
\max\{d(y_1,y_0),d(f^k(y_1),y_0)\}\le L(2L)\delta
\]
for some $y_1\in X$. It follows that
\[
d(f^k(y_1),y_1)\le d(f^k(y_1),y_0)+d(y_1,y_0)\le L(2L)\delta+L(2L)\delta=(2L)^2\delta.
\]
Since
\[
(y_1,f^i(y_1),\dots,f^{k-1}(y_1),y_1)
\]
is a $(2L)^2\delta$-cycle of $f$, we have
\[
\max\{d(y_2,y_1),d(f^k(y_2),y_1)\}\le L(2L)^2\delta
\]
for some $y_2\in X$.  It follows that
\[
d(f^k(y_2),y_2)\le d(f^k(y_2),y_1)+d(y_2,y_1)\le L(2L)^2\delta+L(2L)^2\delta=(2L)^3\delta.
\]
Proceeding inductively, we obtain a sequence $y_n\in X$, $n\ge0$, satisfying $d(f^k(y_n),y_n)\le(2L)^{n+1}\delta$ and
\[
\max\{d(y_{n+1},y_n),d(f^k(y_{n+1}),y_n)\}\le L(2L)^{n+1}\delta
\]
for every $n\ge0$. Because
\[
d(y_0,x_0)+\sum_{n=0}^\infty d(y_{n+1},y_n)\le L\delta+\sum_{n=0}^\infty L(2L)^{n+1}\delta=\frac{L}{1-2L}\delta<\infty,
\]
$(y_n)_{n\ge0}$ is a Cauchy sequence and $\lim_{n\to\infty}y_n=x$ for some $x\in X$ with \[
d(x,x_0)\le\frac{L}{1-2L}\delta.
\]
Moreover, by $d(f^k(y_n),y_n)\le(2L)^{n+1}\delta$ for every $n\ge0$, letting $n\to\infty$, we obtain $f^k(x)=x$. This shows that $f$ satisfies $CR(f)=\overline{Per(f)}$, completing the proof.
\end{proof}

We give a proof of Theorem 1.3.

\begin{proof}[Proof of Theorem 1.3]
By assumption, we have $0<L<M^{-1}$ and $\delta_0>0$ such that the following conditions are satisfied
\begin{itemize}
\item $d(a,b)\le\delta_0$ implies $d(f(a),f(b))\le Md(a,b)$ for all $a,b\in X$,
\item for any $0<\delta\le\delta_0$, every $\delta$-pseudo orbit of $f$ is $L\delta$-shadowed by some point of $X$.
\end{itemize}
Fix $0<\delta\le\delta_0$ and a $\delta$-chain $(x_i)_{i=0}^k$ of $f$. Let us show that $f^k(x_0)=x_k$. We have $y_0\in X$ with
\[
d(f^i(y_0),x_i)\le L\delta
\]
for all $0\le i\le k$. It follows that $d(f(x_0),f(y_0))\le ML\delta$ and $d(f^k(y_0),x_k)\le L\delta$. Since 
\[
(x_0,f(y_0),\dots,f^{k-1}(y_0),x_k)
\]
is an $ML\delta$-chain of $f$, we have
\[
\max\{d(y_1,x_0),d(f^k(y_1),x_k)\}\le L(ML)\delta
\]
for some $y_1\in X$. It follows that $d(f(x_0),f(y_1))\le(ML)^2\delta$. Since
\[
(x_0,f(y_1),\dots,f^{k-1}(y_1),x_k)
\]
is an $(ML)^2\delta$-chain of $f$, we have
\[
\max\{d(y_2,x_0),d(f^k(y_2),x_k)\}\le L(ML)^2\delta
\]
for some $y_2\in X$. It follows that $d(f(x_0),f(y_2))\le(ML)^3\delta$. Proceeding inductively, we obtain a sequence $y_n\in X$, $n\ge0$, satisfying
\[
\max\{d(y_n,x_0),d(f^k(y_n),x_k)\}\le L(ML)^n\delta
\]
for every $n\ge0$. Since $0<ML<1$, letting $n\to\infty$, we obtain $f^k(x_0)=x_k$, proving the claim. Since $X=CR(f)$, for any $p\in X$, there is a $\delta_0/2$-cycle $(z_i)_{i=0}^m$ of $f$ with $z_0=z_m=p$. For any $q\in X$ with $d(p,q)\le\delta_0/2$, since
\[
(z_0,z_1,\dots,z_{m-1},q)
\]
is a $\delta_0$-chain of $f$, we have $f^m(p)=f^m(z_0)=q$. This implies that $f^m(p)=p$ and $p$ is an isolated point of $f$. Since $p\in X$ is arbitrary, we conclude that $X$ consists of isolated points and so is a finite set, completing the proof.
\end{proof}

We need a lemma for the proof of Theorem 1.4. For a continuous map $f\colon X\to X$ and $x\in X$, the {\em $\omega$-limit set} $\omega(x,f)$ of $x$ for $f$ is defined as the set of $y\in X$ such that $\lim_{j\to\infty}f^{i_j}(x)=y$ for some sequence $0\le i_1<i_2<\cdots$. We define a subset $R(f)$ of $X$ by 
\[
R(f)=\{x\in X\colon x\in\omega(x,f)\}.
\]
By Lemma 25 in Chapter IV of \cite{BC}, we know that $R(f)=R(f^k)$ for all $k>0$.

\begin{lem}
If a continuous map $f\colon X\to X$ satisfies $X=R(f)$ and has the contractive shadowing property, then $X$ is a finite set.
\end{lem}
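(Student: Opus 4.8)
The plan is to combine the h-shadowing property with pointwise recurrence. By Theorem 1.1, $f$ has the h-shadowing property, hence (Remark 1.2(1)) $f$ is open; and since $X=R(f)$ every $x$ is a limit of points $f^{i_j}(x)$ with $i_j\to\infty$, so $x\in\overline{f(X)}=f(X)$ and $f$ is surjective. Fix $0<L<1$ and $\delta_0>0$ witnessing the contractive shadowing property; by the proof of Theorem 1.1 (see Remark 2.1), for every $0<\delta\le\delta_0$ and every $\delta$-chain $(x_i)_{i=0}^k$ of $f$ there is $x\in X$ with $f^k(x)=x_k$ and $d(f^i(x),x_i)\le\frac{L}{1-L}\delta$ for $0\le i\le k-1$. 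Finally, by Lemma 25 in Chapter IV of \cite{BC} we have $R(f^m)=R(f)=X$ for all $m\ge1$, and (concatenating a $\delta$-pseudo orbit of $f^m$ into a $\delta$-pseudo orbit of $f$ by filling in orbit segments) each $f^m$ again has the $L$-Lipschitz shadowing property with the same $L$ and $\delta_0$.

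I would argue by contradiction: suppose $X$ is infinite and let $p\in X$ be non-isolated. Using $p\in\omega(p,f)$, for small $\eta>0$ pick $n\ge1$ with $d(f^n(p),p)\le\eta$ and apply h-shadowing to the $\eta$-chain $(p,f(p),\dots,f^{n-1}(p),p)$ to obtain $x\in X$ with $f^n(x)=p$ and $d(x,p)\le\frac{L}{1-L}\eta$; since $x\in R(f)$ and the forward orbit of $x$ agrees with that of $p$ from time $n$ on, $\omega(x,f)=\omega(p,f)$ and $x\in\omega(p,f)$. Letting $\eta\to0$ (the factor $\frac{L}{1-L}$ notwithstanding) shows: for every $\eta>0$, $p$ has an $f^{n}$-preimage within $\eta$ of $p$ for some $n\ge1$, and all such preimages lie in $\omega(p,f)$. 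The model case is $L<1/2$: then $2L<1$, so Theorem 1.2 already gives $\overline{Per(f)}=X$, and the contraction $2L$ can be used as in the proof of Theorem 1.2, but centered at the returns of $p$ and of its successive preimages and using $R(f^m)=X$, to pin down genuine periodic points; a Baire-category argument (the set of fixed points of $f^m$ cannot have non-empty interior, by feeding short pseudo orbits inside such an interior into the $L$-Lipschitz estimate) then contradicts $X$ being infinite.

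For the full range $0<L<1$ one cannot iterate the h-shadowing correction, since that compounds $\frac{L}{1-L}$, which is $\ge1$ once $L\ge1/2$. Instead I would build, along a sequence of returns of $p$ whose errors tend to $0$, a single $\delta$-limit-pseudo orbit and limit-shadow it (h-shadowing implies s-limit shadowing, Remark 1.2(3)), so that the contraction accumulates asymptotically without the correction constant ever being raised to a power; and I expect one also needs the total disconnectedness of $X$ here—which, in the spirit of Theorem 6.1 of \cite{BGO}, should follow from h-shadowing together with $X=R(f)$—so that near $p$ one traps a clopen set $W$ under a suitable power of $f$, after which the $L$-Lipschitz estimate inside $W$ becomes ``ultrametric-like'' and the loss of a factor $2$ to the triangle inequality is avoided, exactly as in the passage from Theorem 1.2 to Theorem 1.5.

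The decisive difficulty, and the part I expect to be genuinely delicate, is precisely the regime $1/2\le L<1$: there every naive contraction estimate loses a factor of $2$, and one must exploit pointwise recurrence of \emph{every} point of $X$—not merely $\overline{Per(f)}=X$, which by the full shift does not force finiteness—to recover the missing contraction, plausibly by combining large return times, the identity $R(f^m)=R(f)$, and the local ultrametric structure coming from total disconnectedness.
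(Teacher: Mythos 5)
There is a genuine gap: you never actually close the argument for any range of $L$, and you explicitly leave the regime $1/2\le L<1$ as an unresolved ``decisive difficulty.'' The idea that is missing is the one the paper's proof of Lemma 2.1 turns on. Take any two points $p,q$ with $0<d(p,q)=\delta<\delta_0$, fix $L<K<1$, and inductively produce points $r_n$ with $\max\{d(p,r_n),d(q,r_n)\}\le K^n\delta$. The inductive step chooses $\epsilon$ with $\epsilon+K^n\delta\le\delta_0$ and $L(\epsilon+K^n\delta)<K^{n+1}\delta$, concatenates three $\epsilon$-cycles (through $p$, through $r_n$, and then the cycle at $q$ repeated forever) into an $(\epsilon+K^n\delta)$-pseudo orbit, and lets $r_{n+1}$ be an $L(\epsilon+K^n\delta)$-shadowing point. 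The crucial trick, which avoids the factor-of-$2$ loss you are worried about, is that the hypothesis $X=R(f)$ applies to the shadowing point itself: $r_{n+1}\in R(f)=R(f^k)$, so some far iterate $f^{km}(r_{n+1})$ (which the shadowing estimate pins within $L(\epsilon+K^n\delta)<K^{n+1}\delta$ of $q$) returns arbitrarily close to $r_{n+1}$. The triangle inequality then costs only an arbitrarily small recurrence term, not a second copy of the shadowing error, so the contraction factor per step is $K$ for \emph{any} $L<1$, forcing $p=q$ and hence that $X$ has no two distinct points within $\delta_0$, i.e.\ is finite.

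By contrast, neither of your proposed substitutes is carried out and neither obviously works. For $L<1/2$, knowing $\overline{Per(f)}=X$ and that fixed-point sets of $f^m$ have empty interior does not produce a contradiction with $X$ infinite (you concede this yourself via the full shift), and the Baire-category step is not specified in a way that uses $X=R(f)$ decisively. For general $L$, the appeal to s-limit shadowing of a limit-pseudo orbit built from returns of $p$ is only a heuristic, and the intermediate claim that h-shadowing together with $X=R(f)$ forces $X$ to be totally disconnected is itself unproved (Theorem 6.1 of \cite{BGO} concerns equicontinuous homeomorphisms, whereas here $f$ is merely continuous); even granting it, the ``ultrametric-like'' estimate inside a clopen set is only available after passing to an equivalent ultrametric, which need not preserve the Lipschitz constant $L$. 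So the proposal identifies the right obstacle but does not supply the mechanism---recurrence of the shadowing point---that the paper uses to overcome it.
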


\begin{proof}
Since $f$ has the contractive shadowing property, there are $0<L<1$ and $\delta_0>0$ such that for any $0<\delta\le\delta_0$, every $\delta$-pseudo orbit of $f$ is $L\delta$-shadowed by some point of $X$. In order to obtain a contradiction, we assume that
\[
0<d(p,q)<\delta_0
\]
for some $p,q\in X$. Let $\delta=d(p,q)$ and $L<K<1$. By induction on $n$, we shall show that there is a sequence $r_n\in X$, $n\ge0$, such that
\[
\max\{d(p,r_n),d(q,r_n)\}\le K^n\delta
\]
for all $n\ge0$, which implies $p=q$, a contradiction. First, for $n=0$, letting $r_0=p$, we have
\[
\max\{d(p,r_0),d(q,r_0)\}=d(p,q)=\delta=K^0\delta.
\]
Assume that
\[
\max\{d(p,r_n),d(q,r_n)\}\le K^n\delta
\]
for some $n\ge0$ and $r_n\in X$. We take $\epsilon>0$ so small that $\epsilon+K^n\delta\le\delta_0$ and $L(\epsilon+K^n\delta)<K^{n+1}\delta$. Since $X=R(f)=CR(f)$, we have three $\epsilon$-cycles $(x_i)_{i=0}^k$, $(y_i)_{i=0}^k$, $(z_i)_{i=0}^k$ of $f$ with $x_0=x_k=p$, $y_0=y_k=r_n$, and $z_0=z_k=q$. By the induction assumption, we see that
\[
\xi=(x_0,x_1,\dots,x_{k-1},y_0,y_1,\dots,y_{k-1},z_0,z_1,\dots,z_{k-1},z_0,z_1,\dots,z_{k-1},z_0,z_1,\dots,z_{k-1},\dots)
\]
is an $(\epsilon+K^n\delta)$-pseudo orbit of $f$ and so $L(\epsilon+K^n\delta)$-shadowed by some $r_{n+1}\in X$. Note that
\[
d(p,r_{n+1})=d(r_{n+1},x_0)\le L(\epsilon+K^n\delta)<K^{n+1}\delta.
\]
Also, by the choice of $\xi$, we have
\[ 
d(q,f^{km}(r_{n+1}))=d(f^{km}(r_{n+1}),z_0)\le L(\epsilon+K^n\delta)<K^{n+1}\delta
\]
for all $m\ge2$. By $r_{n+1}\in R(f)=R(f^k)$, we obtain
\[
d(q,r_{n+1})\le d(q,f^{km}(r_{n+1}))+d(r_{n+1},f^{km}(r_{n+1}))<K^{n+1}\delta
\]
for some $m\ge2$, thus the induction is complete. We conclude that $X$ consists of isolated points and so is a finite set, proving the lemma.
\end{proof}

By Lemma 2.1, we prove Theorem 1.4.
 
\begin{proof}[Proof of Theorem 1.4]
Since $f$ has the contractive shadowing property, Theorem 1.1 implies that $f$ satisfies the h-shadowing property. For any $x\in X$ and $\delta>0$, since
\[
\lim_{i\to\infty}d(f^i(x),\omega(x,f))=0,
\]
there is a $\delta$-chain $(x_i)_{i=0}^k$ of $f$ with $x_0=x$ and $x_k\in\omega(x,f)$. Note that we have
\[
f^{-1}(\omega(x,f))=\omega(x,f)
\]
because $f$ is a homeomorphism. From  the h-shadowing property of $f$, it follows that
\[
x\in\overline{\bigcup_{k>0}f^{-k}(\omega(x,f))}=\omega(x,f).
\]
Since $x\in X$ is arbitrary, we obtain $X=R(f)$, thus by Lemma 2.1, we conclude that $X$ is a finite set, proving the theorem.
\end{proof}

\section{Ultrametric case: proofs of Theorems 1.5 and 1.6}
 
In this section, we prove Theorems 1.5 and 1.6. First, we prove Theorem 1.5. The proof is similar to that of Theorem 1.2 but we use the assumption that $d$ is an ultrametric. 

\begin{proof}[Proof of Theorem 1.5]
Since $f$ has the contractive shadowing property, there are $0<L<1$ and $\delta_0>0$ such that for any $0<\delta\le\delta_0$, every $\delta$-pseudo orbit of $f$ is $L\delta$-shadowed by some point of $X$. Fix $0<\delta\le\delta_0$ and a $\delta$-cycle $(x_i)_{i=0}^k$ of $f$. We have $y_0\in X$ with
\[
d(f^i(y_0),x_i)\le L\delta
\]
for all $0\le i\le k$. From $x_0=x_k$, it follows that
\[
d(f^k(y_0),y_0)\le\max\{d(f^k(y_0),x_k),d(y_0,x_0)\}\le L\delta.
\]
Since 
\[
(y_0,f(y_0),\dots,f^{k-1}(y_0),y_0)
\]
is an $L\delta$-cycle of $f$, we have
\[
\max\{d(y_1,y_0),d(f^k(y_1),y_0)\}\le L^2\delta
\]
for some $y_1\in X$. It follows that
\[
d(f^k(y_1),y_1)\le\max\{d(f^k(y_1),y_0),d(y_1,y_0)\}\le L^2\delta.
\]
Since
\[
(y_1,f(y_1),\dots,f^{k-1}(y_1),y_1)
\]
is an $L^2\delta$-cycle of $f$, we have
\[
\max\{d(y_2,y_1),d(f^k(y_2),y_1)\}\le L^3\delta
\]
for some $y_2\in X$.  It follows that
\[
d(f^k(y_2),y_2)\le\max\{d(f^k(y_2),y_1),d(y_2,y_1)\}\le L^3\delta.
\]
Proceeding inductively, we obtain a sequence $y_n\in X$, $n\ge0$, satisfying $d(f^k(y_n),y_n)\le L^{n+1}\delta$ and
\[
\max\{d(y_{n+1},y_n),d(f^k(y_{n+1}),y_n)\}\le L^{n+2}\delta
\]
for every $n\ge0$. It follows that $(y_n)_{n\ge0}$ is a Cauchy sequence and $\lim_{n\to\infty}y_n=x$ for some $x\in X$ with
\[
d(x,x_0)\le L\delta.
\]
Moreover, by $d(f^k(y_n),y_n)\le L^{n+1}\delta$ for all $n\ge0$, letting $n\to\infty$, we obtain $f^k(x)=x$. This shows that $f$ satisfies $CR(f)=\overline{Per(f)}$, completing the proof.
\end{proof}

Then, we prove Theorem 1.6.

\begin{proof}[Proof of Theorem 1.6]
First, we prove the implication $(1)\implies(2)$. Since $f$ has the $L$-Lipschitz shadowing property, there is $\delta_0>0$ such that for any $0<\delta\le\delta_0$, every $\delta$-pseudo orbit of $f$ is $L\delta$-shadowed by some point of $X$. Fix $0<\delta\le\delta_0$ and $y\in B_\delta(f(x))$. Since $(x,y)$ is a $\delta$-chain of $f$, we have
\[
\max\{d(z_0,x),d(f(z_0),y)\}\le L\delta
\]
for some $z_0\in X$. Since $(z_0,y)$ is a $L\delta$-chain of $f$, we have
\[
\max\{d(z_1,z_0),d(f(z_1),y)\}\le L^2\delta
\]
for some $z_1\in X$. Since $(z_1,y)$ is a $L^2\delta$-chain of $f$, we have
\[
\max\{d(z_2,z_1),d(f(z_2),y)\}\le L^3\delta
\]
for some $z_2\in X$. Proceeding inductively, we obtain a sequence $z_n\in X$, $n\ge0$, satisfying
\[
\max\{d(z_{n+1},z_n),d(f(z_{n+1}),y)\}\le L^{n+2}\delta
\]
for every $n\ge0$. It follows that $(z_n)_{n\ge0}$ is a Cauchy sequence and $\lim_{n\to\infty}z_n=z$ for some $z\in X$ with \[
d(z,x)\le L\delta.
\]
Moreover, by $d(f(z_{n+1}),y)\le L^{n+2}\delta$ for all $n\ge0$, letting $n\to\infty$, we obtain $f(z)=y$. Since $y\in B_\delta(f(x))$ is arbitrary, we obtain
\[
B_\delta(f(x))\subset f(B_{L\delta}(x)),
\]
thus the implication $(1)\implies(2)$ has been proved.

Next, we prove the implication $(2)\implies(1)$. The proof is similar to that of Lemma 1.1 but we use the assumption that $d$ is an ultrametric.  Fix $\delta_0>0$ as in (2). For any $x,y,z\in X$ with $d(f(x),y)\le\delta$ and $d(y,z)\le L\delta$, since
\[
d(f(x),z)\le\max\{d(f(x),y),d(y,z)\}\le\delta,
\]
we have
\[
z\in B_\delta(f(x))\subset f(B_{L\delta}(x)). 
\]
It follows that
\[
B_{L\delta}(y)\subset f(B_{L\delta}(x))
\]
for all $x,y\in X$ with $d(f(x),y)\le\delta$. Let $\xi=(x_i)_{i\ge0}$ be a $\delta$-pseudo orbit of $f$. For any $k>0$ and $0\le i\le k-1$, we have
\[
B_{L\delta}(x_{i+1})\subset f(B_{L\delta}(x_i))
\]
because $d(f(x_i),x_{i+1})\le\delta$. It follows that for any $p_k\in B_{L\delta}(x_k)$, there is a sequence $p_i\in B_{L\delta}(x_i)$, $0\le i\le k-1$, with $f(p_i)=p_{i+1}$ for all $0\le i\le k-1$, implying
\[
p_0\in\bigcap_{i=0}^k f^{-i}(B_{L\delta}(x_i)).
\]
We obtain
\[
\bigcap_{i=0}^\infty f^{-i}(B_{L\delta}(x_i))=\bigcap_{k>0}\bigcap_{i=0}^k f^{-i}(B_{L\delta}(x_i))\ne\emptyset;
\]
therefore, $\xi$ is $L\delta$-shadowed by some $q\in X$. Since $\xi$ is arbitrary, we conclude that $f$ satisfies the $L$-Lipschitz shadowing property, thus the implication $(2)\implies(1)$ has been proved. This completes the proof of the theorem. 
\end{proof}

\begin{rem}
\normalfont
By the above proof of the implication $(2)\implies(1)$, we see that if $d$ is an ultrametric on $X$ and if a continuous map $f\colon X\to X$ satisfies condition $(2)$ for $L=1$, then $f$ has the h-shadowing property and the $1$-Lipschitz shadowing property. 
\end{rem}

\section{Spectral decomposition: proofs of Theorems 1.7, 1.8 and 1.9}

In this section, we prove Theorems 1.7, 1.8, and 1.9; and by Theorems 1.8 and 1.9, we give an alternative proof of Theorem 1.4. Let $f\colon X\to X$ be a continuous map. For any $x,y\in X$ and $\delta>0$, the notation $x\rightarrow_\delta y$ means that there is a $\delta$-chain $(x_i)_{i=0}^k$ of $f$ with $x_0=x$ and $x_k=y$. We write $x\rightarrow y$ if $x\rightarrow_\delta y$ for all $\delta>0$. Note that
\[
CR(f)=\{x\in X\colon x\rightarrow x\}.
\]
We define a relation $\leftrightarrow$ in
\[
CR(f)^2=CR(f)\times CR(f)
\]
by: for any $x,y\in CR(f)$, $x\leftrightarrow y$ if and only if $x\rightarrow y$ and $y\rightarrow x$. It follows that $\leftrightarrow$ is a closed equivalence relation in $CR(f)^2$ and satisfies $x\leftrightarrow f(x)$ for all $x\in CR(f)$. An equivalence class $C$ of $\leftrightarrow$ is called a {\em chain component} for $f$. We denote by $\mathcal{C}(f)$ the set of chain components for $f$.

A subset $S$ of $X$ is said to be $f$-invariant if $f(S)\subset S$. For an $f$-invariant subset $S$ of $X$, we say that $f|_S\colon S\to S$ is {\em chain transitive} if for any $x,y\in S$ and $\delta>0$, there is a $\delta$-chain $(x_i)_{i=0}^k$ of $f|_S$ with $x_0=x$ and $x_k=y$.

\begin{rem}
\normalfont
The following properties hold
\begin{itemize}
\item $CR(f)=\bigsqcup_{C\in\mathcal{C}(f)}C$,
\item every $C\in\mathcal{C}(f)$ is a closed $f$-invariant subset of $CR(f)$,
\item $f|_C\colon C\to C$ is chain transitive for all $C\in\mathcal{C}(f)$,
\item for any $f$-invariant subset $S$ of $X$, if $f|_S\colon S\to S$ is chain transitive, then $S\subset C$ for some $C\in\mathcal{C}(f)$.
\end{itemize}
\end{rem}

Given a continuous map $f\colon X\to X$ and $\delta>0$, we define an equivalence relation $\leftrightarrow_\delta$ in
\[
CR(f)^2=CR(f)\times CR(f)
\]
by: for any $x,y\in CR(f)$, $x\leftrightarrow_\delta y$ if and only if $x\rightarrow_\delta y$ and $y\rightarrow_\delta x$. Since $x\leftrightarrow_\delta f(x)$ for every $x\in CR(f)$, each equivalence class of $\leftrightarrow_\delta$ is an $f$-invariant subset of $CR(f)$. For any $x,y\in CR(f)$ with $d(x,y)\le\delta$, take $z\in CR(f)$ such that $f(z)=x$.
Then, for every $\delta>0$, there is a $\delta$-chain $(x_i)_{i=0}^k$ of $f$ such that $x_0=x$ and $x_k=z$. Since
\[
(x_0,x_1,\dots,x_k,y)
\]
is a $\delta$-chain of $f$, we obtain $x\rightarrow_\delta y$. Similarly, we obtain $y\rightarrow_\delta x$, thus any $x,y\in CR(f)$ with $d(x,y)\le\delta$ satisfies $x\leftrightarrow_\delta y$. It follows that each equivalence class of $\leftrightarrow_\delta$ is an open and closed subset of $CR(f)$. We denote by $\mathcal{C}_\delta(f)$ the (finite) set of equivalence classes of $\leftrightarrow_\delta$. Note that we have $\leftrightarrow_{\delta_1}\subset\leftrightarrow_{\delta_2}$ for all $0<\delta_1<\delta_2$ and
\[
\leftrightarrow=\bigcap_{\delta>0}\leftrightarrow_{\delta}.
\]

Here, we prove Theorems 1.7 and 1.8. The proof of Theorem 1.7 is similar to that of Lemma 1 of \cite{M} but we provide it for completeness. For a continuous map $f\colon X\to X$, a non-empty closed $f$-invariant subset $M$ of $X$ is said to be a {\em minimal set} for $f$ if closed $f$-invariant subsets of $M$ are only $\emptyset$ and $M$. This condition is equivalent to $M=\omega(x,f)$ for all $x\in M$. We say that $x\in X$ is a {\em minimal point} for $f$ if $x\in\omega(x,f)$ and $\omega(x,f)$ is a minimal set for $f$. Let $M(f)$ denote the set of minimal points for $f$. By Zorn's lemma, we have $M(f)\ne\emptyset$. 

\begin{proof}[Proof of Theorem 1.7]
By assumption, we have $\delta_0>0$ such that for any $0<\delta\le\delta_0$, every $\delta$-pseudo orbit of $f$ is $L\delta$-shadowed by some point of $X$. Let $0<\delta\le\delta_0$ and let $\xi=(x_i)_{i\ge0}$ be a $\delta$-pseudo orbit of $f|_{CR(f)}$. Then, there is $C\in\mathcal{C}_\delta(f)$ with $x_i\in C$ for all $i\ge0$. For any $k>0$, since $x_0,x_k\in C$, we have a $\delta$-chain $(y_i)_{i=0}^l$ of $f$ with $y_0=x_k$ and $y_l=x_0$. Since\[
\xi_k=(x_0,x_1,\dots,x_{k-1},y_0,y_1,\dots,y_{l-1},x_0,x_1,\dots,x_{k-1},y_0,y_1,\dots,y_{l-1},\dots)
\]
is a $\delta$-pseudo orbit of $f$, letting
\[
X_k=\{x\in X\colon\text{$\xi_k$ is $L\delta$-shadowed by $x$}\},
\]
we have $X_k\ne\emptyset$. Note that $X_k$ is a closed $f^{k+l}$-invariant subset of $X$ and so satisfies $X_k\cap M(f^{k+l})\ne\emptyset$. By $M(f^{k+l})\subset CR(f)$, we obtain $X_k\cap CR(f)\ne\emptyset$ and so
\[
CR(f)\cap\bigcap_{i=0}^k f^{-i}(B_{L\delta}(x_i))\ne\emptyset.
\]
It follows that
\[
CR(f)\cap\bigcap_{i=0}^\infty f^{-i}(B_{L\delta}(x_i))=\bigcap_{k>0}\left(CR(f)\cap\bigcap_{i=0}^k f^{-i}(B_{L\delta}(x_i))\right)\ne\emptyset,
\]
that is, $\xi$ is $L\delta$-shadowed by some $x\in CR(f)$. Since $\xi$ is arbitrary, we conclude that
\[
f|_{CR(f)}\colon CR(f)\to CR(f)
\]
satisfies the $L$-Lipschitz shadowing property, completing the proof of the theorem.
\end{proof}

\begin{proof}[Proof of Theorem 1.8]
By assumption and Theorem 1.7, we have $\delta_0>0$ such that for any $0<\delta\le\delta_0$, every $\delta$-pseudo orbit of $f|_{CR(f)}$ is $L\delta$-shadowed by some point of $CR(f)$. In order to obtain a contradiction, we assume that $\mathcal{C}(f)$ is an infinite set. Then, there is $\delta>0$ such that, letting
\[
\Delta=\min\{d(C,C')\colon\text{$C,C'\in\mathcal{C}_\delta(f)$ and $C\ne C'$}\},
\]
we have $0<\Delta\le\delta_0$. We fix $C_0,C'_0\in\mathcal{C}_\delta(f)$ with $\Delta=d(C_0,C'_0)$ and $p\in C_0$, $q\in C'_0$ with $d(p,q)=d(C_0,C'_0)$. By taking $r\in C_0$ with $f(r)=p$, because $(r,q)$ is a $\Delta$-chain of $f|_{CR(f)}$, we have
\[
\max\{d(x,r),d(f(x),q)\}\le L\Delta<\Delta
\]
for some $x\in CR(f)$. By definition of $\Delta$, we obtain $x\in C_0$ and $f(x)\in C'_0$, which contradicts $f(x)\in C_0$ and $C_0\ne C'_0$. It follows that $\mathcal{C}(f)$ is a finite set. Then, since every $C\in\mathcal{C}(f)$ is an open and closed $f$-invariant subset of $CR(f)$, we easily see that
\[
f|_C\colon C\to C
\]
satisfies the $L$-Lipschitz shadowing property, proving the theorem.
\end{proof}

Let $f\colon X\to X$ be a chain transitive map. Given $\delta>0$, the {\em length} of a $\delta$-cycle $(x_i)_{i=0}^k$ of $f$ is defined to be $k$. Let $m=m(\delta)>0$ be the greatest common divisor of the lengths of all $\delta$-cycles of $f$. A relation $\sim_\delta$ in
\[
X^2=X\times X
\]
is defined by: for any $x,y\in X$, $x\sim_\delta y$ if and only if there is a $\delta$-chain $(x_i)_{i=0}^k$ of $f$ with $x_0=x$, $x_k=y$, and $m|k$. 

\begin{rem}
\normalfont
The following properties hold 
\begin{itemize}
\item[(P1)] $\sim_\delta$ is an open and closed $(f\times f)$-invariant equivalence relation in $X^2$,
\item[(P2)] any $x,y\in X$ with $d(x,y)\le\delta$ satisfies $x\sim_\delta y$, so for every $\delta$-chain $(x_i)_{i=0}^k$ of $f$, we have $f(x_i)\sim_\delta x_{i+1}$ for each $0\le i\le k-1$, implying $x_i\sim_\delta f^i(x_0)$ for every $0\le i\le k$,
\item[(P3)] for any $x\in X$ and $n\ge0$, $x\sim_\delta f^{mn}(x)$,
\item[(P4)] there exists $N>0$ such that for any $x,y\in X$ with $x\sim_\delta y$ and $n\ge N$, there is a $\delta$-chain $(x_i)_{i=0}^k$ of $f$ with $x_0=x$, $x_k=y$, and $k=mn$.
\end{itemize}
\end{rem}

Fix $x\in X$ and let $D_i$, $i\ge0$, denote the equivalence class of $\sim_\delta$ including $f^i(x)$. Then, $D_m=D_0$, and
\[
X=\bigsqcup_{i=0}^{m-1}D_i
\]
gives the partition of $X$ into the equivalence classes of $\sim_\delta$. Note that every $D_i$, $0\le i\le m-1$, is an open and closed subset of $X$ and satisfies $g(D_i)=D_{i+1}$. We call
\[
\mathcal{D}_\delta(f)=\{D_i\colon 0\le i\le m-1\}
\]
the {\em $\delta$-cyclic decomposition} of $X$.

\begin{defi}
\normalfont
We define a relation $\sim$ in
\[
X^2=X\times X
\]
by: for any $x,y\in X$, $x\sim y$ if and only if $x\sim_\delta y$ for every $\delta>0$, which is a closed $(f\times f)$-invariant equivalence relation in $X^2$. We denote by $\mathcal{D}(f)$ the set of equivalence classes of $\sim$.
\end{defi}

\begin{rem}
\normalfont
The relation $\sim$ was introduced in \cite{S} and rediscovered in \cite{RW} (based on the argument given in \cite[Exercise 8.22]{A}). We say that $(x,y)\in X^2$ is a {\em chain proximal pair} for $f$ if for any $\delta>0$, there is a pair of $\delta$-chains
\[
((x_i)_{i=0}^k,(y_i)_{i=0}^k)
\]
of $f$ with $(x_0,y_0)=(x,y)$ and $x_k=y_k$. As stated in Remark 8 of \cite{RW}, it holds that for any $x,y\in X$, $x\sim y$ if and only if $(x,y)$ is a chain proximal pair for $f$. By this equivalence, we say that $\sim$ is a {\em chain proximal relation} for $f$.
\end{rem}

\begin{rem}
\normalfont
\begin{enumerate}
\item Given any $0<\delta_1<\delta_2$, $x\sim_{\delta_1} y$ implies $x\sim_{\delta_2} y$ for all $x,y\in X$, so we have $\sim_{\delta_1}\subset\sim_{\delta_2}$. Note that
\[
\sim=\bigcap_{\delta>0}\sim_{\delta}.
\]
\item We say that a continuous map $f\colon X\to X$ is {\em chain mixing} if for any $x,y\in X$ and $\delta>0$, there exists $N>0$ such that for each $k\ge N$, there is a $\delta$-chain $(x_i)_{i=0}^k$ of $f$ with $x_0=x$ and $x_k=y$. We easily see that if $f$ is mixing, then $f$ is chain mixing, and the converse holds if $f$ has the shadowing property. If $|\mathcal{D}(f)|<\infty$, then, letting $m=|\mathcal{D}(f)|$, we see that
\[
f^m|_D\colon D\to D
\]
is chain mixing for all $D\in\mathcal{D}(f)$.
\end{enumerate}
\end{rem}

We prove Theorem 1.9. The proof is similar to that of Theorem 1.8.

\begin{proof}[Proof of Theorem 1.9]
By assumption, we have $\delta_0>0$ such that for any $0<\delta\le\delta_0$, every $\delta$-pseudo orbit of $f$ is $L\delta$-shadowed by some point of $X$. In order to obtain a contradiction, we assume that $\mathcal{D}(f)$ is an infinite set. Then, there is $\delta>0$ such that, letting
\[
\Delta=\min\{d(D,D')\colon\text{$D,D'\in\mathcal{D}_\delta(f)$ and $D\ne D'$}\},
\]
we have $0<\Delta\le\delta_0$. We fix $D_0,D'_0\in\mathcal{D}_\delta(f)$ with $\Delta=d(D_0,D'_0)$ and $p\in D_0$, $q\in D'_0$ with $d(p,q)=d(D_0,D'_0)$. By taking $D''_0\in\mathcal{D}_\delta(f)$ with $f(D''_0)=D_0$ and $r\in D''_0$ with $f(r)=p$, because $(r,q)$ is a $\Delta$-chain of $f$, we have
\[
\max\{d(x,r),d(f(x),q)\}\le L\Delta<\Delta
\]
for some $x\in X$. By definition of $\Delta$, we obtain $x\in D''_0$ and $f(x)\in D'_0$, which contradicts $f(x)\in D_0$ and $D_0\ne D'_0$. It follows that $\mathcal{D}(f)$ is a finite set. Since $f$ has the $L$-Lipschitz shadowing property, letting $m=|\mathcal{D}(f)|$, we see that
\[
f^m\colon X\to X
\]
also satisfies the $L$-Lipschitz shadowing property. Then, since every $D\in\mathcal{D}(f)$ is an open and closed $f^m$-invariant subset of $X$, we easily see that
\[
f^m|_D\colon D\to D
\]
satisfies the $L$-Lipschitz shadowing property, proving the theorem.
\end{proof}

Finally, by Theorems 1.8 and 1.9, we give an alternative proof of Theorem 1.4.

\begin{proof}[An alternative proof of Theorem 1.4]
Since $f$ has the contractive shadowing property, there is $0<L<1$ such that $f$ has the $L$-Lipschitz shadowing property. By Theorem 1.8, $\mathcal{C}(f)$ is a finite set. As in the proof of Theorem 1.4 in Section 2, we have $X=R(f)$ and so
\[
X=CR(f)=\bigsqcup_{C\in\mathcal{C}(f)}C.
\]
It remains to prove that every $C\in\mathcal{C}(f)$ is a finite set. Let $C\in\mathcal{C}(f)$. By Theorem 1.8,
\[
f|_{C}\colon C\to C
\]
has the $L$-Lipschitz shadowing property. From Theorem 1.8, it follows that $|\mathcal{D}(f|_C)|<\infty$ and for every $D\in\mathcal{D}(f|_C)$,
\[
f^m|_{D}:D\to D,
\]
where $m=|\mathcal{D}(f|_C)|$, is chain mixing and satisfies the $L$-Lipschitz shadowing property. Let $D\in\mathcal{D}(f|_C)$. Then, by Theorem 1.1, $f^m|_{D}\colon D\to D$ is a mixing homeomorphism with the h-shadowing property. As mentioned in Remark 1.2 (2), it follows that $f^m|_{D}\colon D\to D$ is locally eventually onto. Since $f^m|_{D}\colon D\to D$ is a homeomorphism, $D$ is a singleton. Note that
\[
C=\bigsqcup_{D\in\mathcal{D}(f|_C)}D.
\]
Since $D\in\mathcal{D}(f|_C)$ is arbitrary, we conclude that
\[
|C|=|\mathcal{D}(f|_C)|=m,
\]
proving the claim. Thus, Theorem 1.4 has been proved. 
\end{proof}

\section{Examples}

In this final section, we give several examples.

\begin{ex}
\normalfont
Let $X=\{0,1\}^\mathbb{N}$. For $\alpha>1$, we define a metric $d$ on $X$ by
\[
d(x,y)=\sup_{n\ge1}\alpha^{-n}|x_n-y_n|
\]
for all $x=(x_n)_{n\ge1}, y=(y_n)_{n\ge1}\in X$. Let $\sigma\colon X\to X$ be the shift map, that is,
\[
\sigma(x)_n=x_{n+1}
\]
for all $x=(x_n)_{n\ge1}\in X$ and $n\ge1$. Given any $\delta>0$, we shall show that every $\delta$-pseudo orbit of $\sigma$ is $\alpha^{-1}\delta$-shadowed by some point of $X$. Let $\xi=(x^{(i)})_{i\ge0}$ be a $\delta$-pseudo orbit of $\sigma$. Let $x=(x_1^{(n-1)})_{n\ge1}$ and note that $x\in X$. If $\delta\ge\alpha^{-1}$, then for every $i\ge0$, since
\[
\sigma^i(x)_1=x_{i+1}=x_1^{(i)},
\]
we have
\[
d(\sigma^i(x),x^{(i)})\le\alpha^{-2}\le\alpha^{-1}\delta,
\]
that is, $\xi$ is $\alpha^{-1}\delta$-shadowed by $x$. We assume $0<\delta<\alpha^{-1}$ and take $N\ge1$ such that
\[
\alpha^{-N-1}\le\delta<\alpha^{-N}.
\]
For any $i\ge0$, since
\[
d(\sigma(x^{(i)}),x^{(i+1)})\le\delta<\alpha^{-N},
\]
we have
\[
x_{n+1}^{(i)}=\sigma(x^{(i)})_n=x_n^{(i+1)}
\]
for all $1\le n\le N$. It follows that for every $i\ge0$,
\[
\sigma^i(x)_n=x_{i+n}=x_1^{(i+n-1)}=x_2^{(i+n-2)}=\cdots=x_n^{(i)}
\]
for all $1\le n\le N+1$, thus we obtain
\[
d(\sigma^i(x),x^{(i)})\le\alpha^{-N-2}\le\alpha^{-1}\delta
\]
for all $i\ge0$, that is, $\xi$ is $\alpha^{-1}\delta$-shadowed by $x$. Since $\xi$ is arbitrary, we conclude that $\sigma$ has the $\alpha^{-1}$-Lipschitz shadowing property. Note that $\sigma$ satisfies
\[
d(\sigma(x),\sigma(y))\le\alpha d(x,y)
\]
for all $x,y\in X$. This shows that the upper bound $M^{-1}$ in Theorem 1.3 is sharp.
\end{ex}

\begin{ex}
\normalfont
Let $X=\{0,1\}^\mathbb{Z}$. For $\alpha>1$, we define a metric $d$ on $X$ by
\[
d(x,y)=\sup_{n\in\mathbb{Z}}\alpha^{-|n|}|x_n-y_n|
\]
for all $x=(x_n)_{n\in\mathbb{Z}}, y=(y_n)_{n\in\mathbb{Z}}\in X$. Let $\sigma\colon X\to X$ be the shift map. Given any $\delta>0$, we shall show that every $\delta$-pseudo orbit of $\sigma$ is $\delta$-shadowed by some point of $X$. Let $\xi=(x^{(i)})_{i\ge0}$ be a $\delta$-pseudo orbit of $\sigma$ and let
\[
x^{(i)}=\sigma^i(x^{(0)})
\]
for all $i\le0$. Let $x=(x_0^{(n)})_{n\in\mathbb{Z}}$ and note that $x\in X$. If $\delta\ge\alpha^{-1}$, then for every $i\ge0$, since
\[
\sigma^i(x)_0=x_i=x_0^{(i)},
\]
we have
\[
d(\sigma^i(x),x^{(i)})\le\alpha^{-1}\le\delta,
\]
that is, $\xi$ is $\delta$-shadowed by $x$. We assume $0<\delta<\alpha^{-1}$ and take $N\ge1$ such that
\[
\alpha^{-N-1}\le\delta<\alpha^{-N}.
\]
For any $i\in\mathbb{Z}$, since
\[
d(\sigma(x^{(i)}),x^{(i+1)})\le\delta<\alpha^{-N},
\]
we have
\[
x_{n+1}^{(i)}=\sigma(x^{(i)})_n=x_n^{(i+1)}
\]
for all $-N\le n\le N$. If $-N\le n\le-1$, we have
\[
\sigma^i(x)_n=x_{i+n}=x_0^{(i+n)}=x_{-1}^{(i+n+1)}=\cdots=x_n^{(i)}.
\]
When $n=0$, we have
\[
\sigma^i(x)_0=x_i=x_0^{(i)}.
\]
If $1\le n\le N+1$, we have
\[
\sigma^i(x)_n=x_{i+n}=x_0^{(i+n)}=x_1^{(i+n-1)}=\cdots=x_n^{(i)}.
\]
If follows that
\[
d(\sigma^i(x),x^{(i)})\le\alpha^{-N-1}\le\delta
\]
for all $i\ge0$, that is, $\xi$ is $\delta$-shadowed by $x$. Since $\xi$ is arbitrary, we conclude that $\sigma$ has the $1$-Lipschitz shadowing property.
\end{ex}

\begin{ex}
\normalfont
Let $X$ be a compact metric space and let $Y=X^\mathbb{N}$. For $\alpha>1$, we define a metric $D$ on $Y$ by
\[
D(x,y)=\sup_{n\ge1}\alpha^{-n}d(x_n,y_n)
\]
for all $x=(x_n)_{n\ge1}, y=(y_n)_{n\ge1}\in Y$. Let $\sigma\colon Y\to Y$ be the shift map, that is,
\[
\sigma(x)_n=x_{n+1}
\]
for all $x=(x_n)_{n\ge1}\in Y$ and $n\ge1$. Given any $\delta>0$, we shall show that every $\delta$-pseudo orbit of $\sigma$ is $\delta/(\alpha-1)$-shadowed by some point of $Y$. Let $\xi=(x^{(i)})_{i\ge0}$ be a $\delta$-pseudo orbit of $\sigma$. Let $x=(x_1^{(n-1)})_{n\ge1}$ and note that $x\in Y$. Since
\[
D(\sigma(x^{(i)}),x^{(i+1)})\le\delta
\]
for all $i\ge0$, it holds that for every $i\ge0$,
\begin{align*}
d(\sigma^i(x)_n,x_n^{(i)})&=d(x_{i+n},x_n^{(i)})=d(x_1^{(i+n-1)},x_n^{(i)})\\&\le\sum_{j=1}^{n-1}d(x_j^{(i+n-j)},x_{j+1}^{(i+n-j-1)})=\sum_{j=1}^{n-1}d(x_j^{(i+n-j)},\sigma(x^{(i+n-j-1)})_j)\le\sum_{j=1}^{n-1}\alpha^j\delta
\end{align*}
for all $n\ge1$, thus we obtain
\[
D(\sigma^i(x),x^{(i)})\le\sup_{n\ge1}\left(\alpha^{-n}\cdot\sum_{j=1}^{n-1}\alpha^j\delta\right)\le\sum_{k=1}^\infty\alpha^{-k}\delta=\frac{1}{\alpha-1}\delta
\]
for all $i\ge0$, that is, $\xi$ is $\delta/(\alpha-1)$-shadowed by $x$. Since $\xi$ is arbitrary, we conclude that $\sigma$ has the $1/(\alpha-1)$-Lipschitz shadowing property. If $\alpha>2$, then $1/(\alpha-1)<1$. Note that $Y$ is infinite dimensional when, for example, $X=[0,1]$.
\end{ex}

\begin{ex}
\normalfont
Let $X$ be a compact metric space and let $Y=X^\mathbb{Z}$. For $\alpha>1$, we define a metric $D$ on $Y$ by
\[
D(x,y)=\sup_{n\in\mathbb{Z}}\alpha^{-|n|}d(x_n,y_n)
\]
for all $x=(x_n)_{n\in\mathbb{Z}}, y=(y_n)_{n\in\mathbb{Z}}\in Y$. Let $\sigma\colon Y\to Y$ be the shift map. Given any $\delta>0$, we shall show that every $\delta$-pseudo orbit of $\sigma$ is $\alpha\delta/(\alpha-1)$-shadowed by some point of $Y$. Let $\xi=(x^{(i)})_{i\ge0}$ be a $\delta$-pseudo orbit of $\sigma$ and let
\[
x^{(i)}=\sigma^i(x^{(0)})
\]
for all $i\le0$. Let $x=(x_0^{(n)})_{n\in\mathbb{Z}}$ and note that $x\in Y$. We have
\[
D(\sigma(x^{(i)}),x^{(i+1)})\le\delta
\]
for all $i\in\mathbb{Z}$. If $n\le-1$, we have
\begin{align*}
d(\sigma^i(x)_n,x_n^{(i)})&=d(x_{i+n},x_n^{(i)})=d(x_0^{(i+n)},x_n^{(i)})\le\sum_{j=0}^{-n-1}d(x_{-j}^{(i+n+j)},x_{-j-1}^{(i+n+j+1)})\\
&=\sum_{j=0}^{-n-1}d(\sigma(x^{(i+n+j)})_{-j-1},x^{(i+n+j+1)}_{-j-1})\le\sum_{j=0}^{-n-1}\alpha^{j+1}\delta,
\end{align*}
implying
\[
\alpha^n d(\sigma^i(x)_n,x_n^{(i)})\le\alpha^n\cdot\sum_{j=0}^{-n-1}\alpha^{j+1}\delta\le\sum_{k=0}^\infty\alpha^{-k}\delta=\frac{\alpha}{\alpha-1}\delta.
\]
When $n=0$, we have
\[
d(\sigma^i(x)_0,x_0^{(i)})=d(x_i,x_0^{(i)})=d(x_0^{(i)},x_0^{(i)})=0.
\]
If $n\ge1$, we have
\begin{align*}
d(\sigma^i(x)_n,x_n^{(i)})&=d(x_{i+n},x_n^{(i)})=d(x_0^{(i+n)},x_n^{(i)})\le\sum_{j=0}^{n-1}d(x_{j}^{(i+n-j)},x_{j+1}^{(i+n-j-1)})\\&=\sum_{j=0}^{n-1}d(x^{(i+n-j)}_j,\sigma(x^{(i+n-j-1)})_j)\le\sum_{j=0}^{n-1}\alpha^j\delta,
\end{align*}
implying
\[
\alpha^{-n}d(\sigma^i(x)_n,x_n^{(i)})\le\alpha^{-n}\cdot\sum_{j=0}^{n-1}\alpha^j\delta\le\sum_{k=1}^\infty\alpha^{-k}\delta=\frac{1}{\alpha-1}\delta.
\]
It follows that
\[
D(\sigma^i(x),x^{(i)})\le\frac{\alpha}{\alpha-1}\delta
\]
for all $i\ge0$, that is, $\xi$ is $\alpha\delta/(\alpha-1)$-shadowed by $x$. Since $\xi$ is arbitrary, we conclude that $\sigma$ has the $\alpha/(\alpha-1)$-Lipschitz shadowing property.
\end{ex}

\begin{ex}
\normalfont
Let $\mathbb{Z}_2=\{0,1\}$ and let $X=\{0,1\}^\mathbb{N}$. For $\alpha>1$, let $d$ be a metric on $X$ defined by
\[
d(x,y)=\sup_{n\ge1}\alpha^{-n}|x_n-y_n|
\]
for all $x=(x_n)_{n\ge1}, y=(y_n)_{n\ge1}\in X$. Note that $d$ is an ultrametric on $X$. We define a continuous surjection $f\colon X\to X$ by for any $x,y\in X$, $y=f(x)$ if and only if 
\[
y_n=x_n+x_{n+1}
\]
for all $n\ge1$. Given any $\delta>0$, we shall show that
\[
B_\delta(f(x))=f(B_{\alpha^{-1}\delta}(x))
\]
for all $x\in X$. If $\delta\ge1$, then
\[
B_\delta(f(x))=f(B_{\alpha^{-1}\delta}(x))=X
\]
for all $x\in X$. We assume $0<\delta<1$ and take $N\ge0$ such that
\[
\alpha^{-N-1}\le\delta<\alpha^{-N}.
\]
Note that
\[
B_{\alpha^{-1}\delta}(x)=\{y=(y_n)_{n\ge1}\in X\colon\text{$x_n=y_n$ for all $1\le n\le N+1$}\}
\]
for all $x\in X$. If $N=0$, we have
\[
B_\delta(f(x))=f(B_{\alpha^{-1}\delta}(x))=X
\]
for all $x\in X$. If $N\ge1$, we have
\[
B_\delta(f(x))=f(B_{\alpha^{-1}\delta}(x))=\{y=(y_n)_{n\ge1}\in X\colon\text{$f(x)_n=y_n$ for all $1\le n\le N$}\}
\]
for all $x\in X$. In both cases, we have
\[
B_\delta(f(x))=f(B_{\alpha^{-1}\delta}(x))
\]
for all $x\in X$. By Theorem 1.6, we conclude that $f$ satisfies the $\alpha^{-1}$-Lipschitz shadowing property.
\end{ex}

\begin{ex}
\normalfont
Let $X$ be a totally disconnected compact metric space endowed with an ultrametric $D$. For any equicontinuous homeomorphism $f\colon X\to X$, let $d$ be a metric on $X$ defined by
\[
d(x,y)=\sup_{i\in\mathbb{Z}}D(f^i(x),f^i(y))
\]
for all $x,y\in X$. It follows that $d$ is an ultrametric on $X$ equivalent to $D$ and $f$ is an isometry with respect to $d$, that is, $f$ satisfies
\[
d(f(x),f(y))=d(x,y)
\]
for all $x,y\in X$. This implies that
\[
B_{\delta}(f(x))=f(B_\delta(x))
\]
for all $\delta>0$ and $x\in X$, thus as stated in Remark 3.1, $f$ satisfies the h-shadowing property and the $1$-Lipschitz shadowing property (with respect to $d$). 
\end{ex}

\begin{ex}
\normalfont
Let $X=S^1=\{z\in\mathbb{C}\colon |z|=1\}$ and let $d$ be the arc-length metric on $X$.
For $n\ge2$, we define a continuous surjection $f_n\colon X\to X$ by
\[
f_n(z)=z^n
\]
for all $z\in X$. Then, there is $\delta_n>0$ such that if $0<\delta\le\delta_n$, then
\[
B_{n\delta}(f_n(z))\subset f_n(B_\delta(z))
\]
for all $z\in X$. For any $0<\Delta\le(n-1)\delta_n$, since
\[
0<\frac{1}{n-1}\Delta\le\delta_n,
\]
we have
\[
B_{\Delta+\frac{1}{n-1}\Delta}(f_n(z))=B_{n\cdot\frac{1}{n-1}\Delta}(f_n(z))\subset f_n(B_{\frac{1}{n-1}\Delta}(z))
\]
for all $z\in X$. By Lemma 1.1, we conclude that every $\Delta$-pseudo orbit of $f$ is $\Delta/(n-1)$-shadowed by some point of $X$. It follows that $f$ satisfies the $1/(n-1)$-Lipschitz shadowing property. Note that $1/(n-1)<1$ if $n\ge3$.
\end{ex}

\begin{ex}
\normalfont
Let $X=\{0,1\}^\mathbb{N}$ and let $d$ be a metric on $X$ defined by
\[
d(x,y)=\sup_{n\ge1}2^{-n}|x_n-y_n|
\]
for all $x=(x_n)_{n\ge1}, y=(y_n)_{n\ge1}\in X$.  For $K\ge2$, we define a continuous map $f_K\colon X\to X$ by
\[
f_K(x)_n=x_{Kn}
\]
for all $x=(x_n)_{n\ge1}\in X$ and $n\ge1$. Given any $\delta>0$, we shall show that every $\delta$-pseudo orbit of $f_K$ is $\delta^K$-shadowed by some point of $X$. Let $\xi=(x^{(i)})_{i\ge0}$ be a $\delta$-pseudo orbit of $f_K$. We define $x=(x_n)_{n\ge1}\in X$ by
\[
x_n=x_\alpha^{(\beta)}
\]
for all $n=\alpha K^{\beta}$ where $\alpha\ge1$ and $\beta\ge0$ are integers with $K{\not|}\alpha$. If $\delta\ge2^{-1}$, then for every $i\ge0$, since
\[
f_K^i(x)_n=x_{K^in}=x_n^{(i)}
\]
for all $1\le n\le K-1$, we have
\[
d(f_K^i(x),x^{(i)})\le2^{-K}\le\delta^K,
\]
that is, $\xi$ is $\delta^K$-shadowed by $x$. We assume $0<\delta<2^{-1}$ and take $N\ge1$ such that
\[
2^{-N-1}\le\delta<2^{-N}.
\]
For any $i\ge0$, since
\[
d(f_K(x^{(i)}),x^{(i+1)})\le\delta<2^{-N},
\]
we have
\[
x_{Kn}^{(i)}=f_K(x^{(i)})_n=x_n^{(i+1)}
\]
for all $1\le n\le N$. Given any
\[
1\le n\le KN+K-1,
\]
letting $n=\alpha K^\beta$ where $\alpha\ge1$ and $\beta\ge0$ are integers with $K{\not|}\alpha$, we have
\[
f_K^i(x)_n=x_{K^in}=x_{\alpha K^{i+\beta}}=x_\alpha^{(i+\beta)}
\]
for all $i\ge0$. When $\beta=0$, we obtain
\[
f_K^i(x)_n=x_\alpha^{(i)}=x_n^{(i)}
\]
for all $i\ge0$. If $\beta\ge1$, then since
\[
1\le n=\alpha K^\beta\le KN+K-1
\]
and so
\[
1\le \alpha K^{\beta-1}\le N,
\]
we obtain
\[
x_n^{(i)}=x_{\alpha K^\beta}^{(i)}=x_{\alpha K^{\beta-1}}^{(i+1)}=\cdots=x_\alpha^{(i+\beta)},
\]
implying 
\[
f_K^i(x)_n=x_\alpha^{(i+\beta)}=x_n^{(i)}
\]
for all $i\ge0$. It follows that for every $i\ge0$,
\[
f_K^i(x)_n=x_n^{(i)}
\]
for all $1\le n\le KN+K-1$, thus we obtain
\[
d(f_K^i(x),x^{(i)})\le2^{-KN-K}\le\delta^K,
\]
that is, $\xi$ is $\delta^K$-shadowed by $x$, completing the proof of the claim. As a consequence, for any $K\ge2$, $f_K$ satisfies the $L$-Lipschitz shadowing property for all $L>0$.
\end{ex}

\end{document}